\documentclass[12pt]{amsart}
\usepackage{txfonts}
\usepackage{amsfonts}
\usepackage{amsfonts}
\usepackage{amssymb,latexsym}
\usepackage{cite}
\usepackage{enumerate}
\textwidth 15.00cm \textheight 20cm \topmargin 0.0cm \oddsidemargin
0.5cm \evensidemargin 0.5cm
\parskip 0.0cm

\newtheorem{theorem}{Theorem}

\newtheorem{remark}[theorem]{Remark}
\newtheorem{proposition}[theorem]{Proposition}
\newtheorem{lemma}[theorem]{Lemma}

\numberwithin{equation}{section}
\numberwithin{theorem}{section}

\setcounter{tocdepth}{2}
\usepackage{titletoc}

\usepackage{hyperref}
\hypersetup{hypertex=true,colorlinks=true,linkcolor=blue,anchorcolor=blue,citecolor=blue}
\begin{document}
\title[Overdetermined problems]{Serrin-type Overdetermined problems for Hessian quotient equations}
\author{Zhenghuan Gao}
\address{Department of Mathematics, Shanghai University, Shanghai, 200444, China}
\email{gzh@shu.edu.cn}
\author{Xiaohan Jia} 
\address{School of Mathematical Sciences, Xiamen University, Xiamen, 361005, China}
\email{jiaxiaohan@xmu.edu.cn}
\author{Dekai Zhang}  
\address{Department of Mathematics, Shanghai University, Shanghai, 200444, China}
\email{dkzhang@shu.edu.cn}

\begin{abstract}
We prove the symmetry of solutions to overdetermined problems for a class of fully nonlinear equations, namely Hessian quotient equations and Hessian quotient curvature equations. Our approach is based on establishing a Rellich-Pohozaev type identity for Hessian quotient equations and using a P function. Our result generalizes the overdetermined problems for $k$-Hessian equations and $k$-curvature equations.
\end{abstract}
\keywords{overdetermined problem, Hessian quotient equation, P functions, Rellich-Pohozaev identity.}
\maketitle

\section{Introduction}

In the seminal paper \cite{Serrin1971}, Serrin established the symmetry of the solution to 
\begin{equation}\label{laplace}
    \Delta u=n
\end{equation}
in a bounded $C^2$ domain $\Omega\subset\mathbb R^n$ with 
\begin{equation}\label{bdrycd}
u=0 \quad\text{and}\quad \frac{ \partial u}{\partial \gamma}=1\quad\text{on }\ \partial\Omega.
\end{equation} 
where $\gamma$ is the unit outer normal to $\partial\Omega$. 
 If $u\in C^2(\overline\Omega)$ is a solution to \eqref{laplace} and \eqref{bdrycd}, Serrin \cite{Serrin1971} proved that upto a translation $u=\frac{|x|^2-1}2$ and $\Omega$ is the unit ball centered at the origin. The proof is based on the method of \textit{moving planes} and it can be applied to more general uniformly elliptic equations. In \cite{Weinberger1971}, Weinberger provided an alternative proof by using maximum principle for P function and a Rellich-Pohozaev type identity.

There have been many generalizations of Serrin and Weinberger's work to  quasilinear elliptic equations (see e.g. \cite{FK2008,FGK2006,Garofalo1989} and reference therein ) and fully nonlinear equations  such as Hessian equation and Weingarten curvature equation (see e.g. \cite{Salani2008,JiaXiaohan2020,Bao2014}). 
In the Euclidean space, the overdetermined  problem for the $k$-Hessian equation i.e. $S_k(D^2u)=C_n^k$,  was studied in  \cite{Salani2008} by using a Rellich-Pohozaev type identity and Newton inequalities  and was also dealt in \cite{Bao2014} by using the method of moving planes. Using the $P$ function $P=|Du|^2-2u$ as mentioned in \cite{Ma1999, Weinberger1971}, we can prove the overdetermined problems to Hessian quotient equations in the  Euclidean space and in the hyperbolic space.

P functions have been extensively investigated and inspired several effective works in the context of elliptic partial differential equations. For fully nonlinear equations, the P function for 2-dimensional Monge-Amp\`ere equation was given by Ma \cite{Ma1999}, and the P functions for $k$-Hessian equations and $k$-curvature equations were given by Philippin and Safoui \cite{Safoui}. 

Let $\Omega$ be a bounded $C^2$ domain, $\gamma$ be the unit outer normal to $\partial\Omega$. Let $k,l$ be integers such that $0\leq l<k\leq n$.
In the first part of this paper, we consider the following overdetermined problem for Hessian quotient equations in the Euclidean space $\mathbb R^n$,
\begin{equation}\label{mainpb}
    \begin{aligned}
        \begin{cases}
            S_k(D^2u)=\frac{C_n^k}{C_n^l} {S_l(D^2u)}&\quad\text{in }\Omega,\\
            u=0&\quad\text{on }\partial \Omega,\\
           \frac{ \partial u}{\partial \gamma}=1&\quad\text{on }\partial\Omega.
        \end{cases}
    \end{aligned}
\end{equation}


Our first result is the following.
\begin{theorem}\label{mainthm}
Let $\Omega$ be a $C^2$ bounded domain in $\mathbb R^n$, $u\in C^3(\Omega)\cap C^2(\overline\Omega)$ be a solution to \eqref{mainpb} with the integer $k, l$ satisfying $0\le l< k\le n$ and $S_{l}(D^2 u)>0$ in $\overline\Omega$. Then upto a translation $u=\frac{|x|^2-1}2$ and $\Omega$ is the unit ball with the center at $0$.
\end{theorem}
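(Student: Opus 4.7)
My plan is to adapt the Weinberger-type argument indicated in the introduction to the Hessian quotient operator $F(D^2u) := S_k(D^2u)/S_l(D^2u)$, built around the $P$-function $P := |Du|^2 - 2u$ of Ma--Weinberger. Two elementary observations set the target. On the candidate radial solution $u = (|x|^2-1)/2$ one has $Du=x$, and hence $P \equiv 1$. On $\partial\Omega$, the Dirichlet condition $u=0$ forces $Du$ to be normal, so that $Du = (\partial u/\partial \gamma)\gamma = \gamma$ by the Neumann condition, and therefore $P \equiv 1$ on $\partial\Omega$. The entire problem will thus reduce to showing $P \equiv 1$ throughout $\Omega$, since the identity $|Du|^2 = 2u+1$ combined with the equation will pin down $D^2 u = I$ and hence force $u(x) = (|x-x_0|^2-1)/2$ with $\Omega = B_1(x_0)$.

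The interior analysis of $P$ is the first pillar. Writing $F^{ij} = \partial F/\partial u_{ij}$ and $\mathcal{L} = F^{ij}\partial_{ij}$, the hypothesis $S_l>0$ in $\overline{\Omega}$ together with the equation places the solution in the $k$-admissible cone and renders $\mathcal{L}$ elliptic. Direct differentiation yields $P_{ij} = 2 u_k u_{ijk} + 2 u_{ik} u_{jk} - 2 u_{ij}$. Contracting with $F^{ij}$, noting that $F$ is constant so the transport term vanishes ($F^{ij}u_{ijk}=0$), and using the Euler identity $F^{ij}u_{ij}=(k-l)F$ arising from the homogeneity of the quotient, one obtains
\begin{equation*}
\mathcal{L} P = 2 F^{ij} u_{ik} u_{jk} - 2(k-l) F.
\end{equation*}
In diagonal coordinates, $F^{ij}u_{ik}u_{jk}$ rewrites as an algebraic expression in $S_1,S_k,S_l,S_{k+1},S_{l+1}$, and an appropriate Newton--Maclaurin inequality on the $k$-admissible cone yields $\mathcal{L} P \ge 0$, with equality precisely when all eigenvalues of $D^2 u$ coincide. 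By the weak maximum principle, $P \le 1$ throughout $\Omega$.

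The second pillar is a Rellich--Pohozaev identity for the quotient equation: multiplying the equation by a divergence-structure multiplier of type $x\cdot Du$, integrating over $\Omega$, and integrating by parts via the divergence-free property of the Newton tensors associated with $S_k$ and $S_l$, one obtains an identity whose boundary integral collapses (using $u=0$ and $|Du|=1$) to a constant multiple of $|\partial\Omega|$, and whose interior part reduces to $\int_\Omega F\,dx = (C_n^k/C_n^l)|\Omega|$. The resulting equality turns out to be equivalent to the integrated statement $\int_\Omega \mathcal{L} P\,dx = 0$; combined with the pointwise bound $\mathcal{L} P \ge 0$ from the previous step, this forces $\mathcal{L} P \equiv 0$, hence equality in the Newton--Maclaurin inequality, and therefore $D^2 u$ has a single eigenvalue throughout $\Omega$. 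The equation then pins this eigenvalue to $1$, giving $D^2 u = I$, and the boundary conditions complete the identification $u = (|x-x_0|^2-1)/2$, $\Omega = B_1(x_0)$. The main obstacle I anticipate is proving both the nonnegativity and the sharp rigidity of $\mathcal{L} P$ for the quotient: while for a single $S_k$ equation this is classical (Philippin--Safoui), the extension to $S_k/S_l$ requires a simultaneous use of Newton's inequalities for $S_k$ and $S_l$ with careful tracking of the equality case, and ensuring the Pohozaev integration by parts is justifiable under the $C^3(\Omega)\cap C^2(\overline\Omega)$-regularity hypothesis.
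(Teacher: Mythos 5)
Your overall architecture --- $k$-convexity from $S_l>0$, the $P$-function $P=|Du|^2-2u$ with $F^{ij}P_{ij}\ge 0$ via Newton--Maclaurin and the maximum principle, and a Rellich--Pohozaev identity to force rigidity --- is exactly the paper's. The first pillar is carried out correctly and matches the paper's Lemma on the $P$-function.

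The gap is in how you propose to close the argument with the Pohozaev identity; two specific claims there would not survive execution. First, the boundary integral does not ``collapse to a constant multiple of $|\partial\Omega|$'': what actually appears is $\int_{\partial\Omega}S_k^{ij}|Du|^2x_i\gamma_j\,\mathrm d\sigma$ (and its $S_l$ analogue), and reducing it requires (i) the boundary identity $S_k^{ij}x_j\gamma_i|Du|^2=S_k^{ij}u_iu_j\,x\cdot\gamma$ coming from $Du=u_\gamma\gamma$, (ii) the level-set curvature formula $S_k^{ij}u_iu_j=H_{k-1}|Du|^{k+1}$, and (iii) the Minkowski integral formula, which together turn it into $(n-k+1)\int_\Omega S_{k-1}\,\mathrm dx$ --- an interior integral, not $c\,|\partial\Omega|$. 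Second, the resulting identity is not ``equivalent to $\int_\Omega\mathcal{L}P\,\mathrm dx=0$'': for the quotient operator, $F^{ij}=S_l^{-2}(S_k^{ij}S_l-S_kS_l^{ij})$ is not divergence-free (only $S_k^{ij}$ and $S_l^{ij}$ separately are), so $\int_\Omega F^{ij}P_{ij}\,\mathrm dx$ does not reduce to a boundary term and no such identity is available. What the Pohozaev identity actually yields, after inserting the Newton--Maclaurin bounds $\frac{S_{l-1}}{S_{k-1}}\cdot\frac{(n-l+1)C_n^k}{(n-k+1)C_n^l}\le\frac lk$ and $S_k\le\frac{n-k+1}{k}S_{k-1}$ together with $1-|Du|^2\ge-2u>0$, is the weighted inequality $\int_\Omega S_{k-1}\,(|Du|^2-2u-1)\,\mathrm dx\ge 0$; since $S_{k-1}>0$ and the maximum principle gives $P-1\le 0$ pointwise, this forces $P\equiv 1$, and only then does $F^{ij}P_{ij}=0$ deliver equality in Newton--Maclaurin and $D^2u=I$. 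You need to replace your claimed equivalence with this weighted-integrand argument (or supply a proof of $\int_\Omega\mathcal{L}P\,\mathrm dx=0$, which I do not see how to obtain).
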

Since $S_{l}(D^2 u)>0$, similar as the argument in \cite{Salani2008}, we can prove that $u$ is $k$-convex which means $S_i(D^2 u)>0, 1\le i\le k$ in $\overline\Omega$. 
Then by maximum principle, $u<0$ in $\Omega$, and the solution to Dirichlet problem of $S_k(D^2u)=\frac{C_n^k}{C_n^l} {S_l(D^2u)}$ is unique. 

In the second part, we consider Hessian quotient equations in the hyperbolic space $\mathbb H^n$,
\begin{align}\label{mainpb2}
    \begin{cases}
    S_k(D^2u-uI)=\frac{C_n^k}{C_n^l}S_l(D^2u-uI)&\quad\text{in }\Omega,\\
    u=0&\quad\text{on }\partial\Omega,\\
    \frac{\partial u}{\partial \gamma}=1&\quad\text{on }\partial\Omega.
    \end{cases}
\end{align}
Our result is as follows.
\begin{theorem}\label{mainthm2}
Let $\Omega$ be a $C^2$ bounded domain in $\mathbb H^n$,  $u\in C^3(\Omega)\cap C^2(\overline\Omega)$ be a solution to \eqref{mainpb} with the integer $k, l$ satisfying $0\le l< k\le n$ and $S_{l}(D^2 u-uI)>0$ in $\overline\Omega$. Then upto a translation $u=\frac{\cosh r}{\cosh R}-1$ and $\Omega$ is the ball of radius $\tanh^{-1}1$ with the center at $0$.
\end{theorem}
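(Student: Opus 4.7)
\textbf{Proof plan for Theorem \ref{mainthm2}.} The strategy mirrors the Euclidean proof of Theorem~\ref{mainthm}, with both the P-function and the Rellich--Pohozaev identity adapted to the hyperbolic background. The guiding observation is that on the conjectural radial solution $u(r) = (\cosh r)/(\cosh R) - 1$ one has $W := D^2u - u I \equiv I$ and $(u+1)^2 - |\na u|^2 \equiv 1/\cosh^2 R$, so the natural hyperbolic analogue of Weinberger's quantity $|\na u|^2 - 2u$ is
\[
P \;:=\; |\na u|^2 - (u+1)^2,
\]
which vanishes on $\p\Omega$ thanks to the overdetermined data $u = 0$, $|\na u| = 1$.

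As in the Euclidean setting, the hypothesis $S_l(W) > 0$ forces $u$ to be $k$-admissible (so $S_j(W) > 0$ for $1 \le j \le k$) and the linearized operator $L^{ij} = S_k^{ij}(W) - (C_n^k/C_n^l)\, S_l^{ij}(W)$ is elliptic. A direct commutator computation using the constant sectional curvature $-1$ of $\mathbb H^n$ yields the crucial Codazzi identity $\na_i W_{jk} = \na_j W_{ik}$; this is precisely what makes the $-uI$ correction natural, since it implies the divergence-free property of the Newton tensors $T_{k-1}(W)$ and $T_{l-1}(W)$ that express $S_k(W)$ and $S_l(W)$ in divergence form. Combining this Codazzi identity with the Newton--Maclaurin inequality on the eigenvalues of $W$, a computation parallel to the Euclidean one gives $L(P) \ge 0$ in $\Omega$, with equality at a point exactly when $W$ is a scalar multiple of $I$ there.

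For the global rigidity step, I would use the hyperbolic conformal vector field $Z = \sinh r\, \p_r = \na(\cosh r)$, which satisfies $D^2(\cosh r) = (\cosh r)\, g$ and plays the role of the Euclidean position vector $x$. Testing the equation $S_k(W) = (C_n^k/C_n^l)\, S_l(W)$ against $Z(u)$, integrating by parts using the divergence-free Newton tensors, and simplifying the boundary contribution with $u = 0$ and $|\na u| = 1$ on $\p\Omega$ should produce a weighted Rellich--Pohozaev identity of the schematic form
\[
\int_\Omega \cosh r \cdot L(P)\, dV \;=\; 0.
\]
Combined with $L(P) \ge 0$ and $\cosh r > 0$ this forces $L(P) \equiv 0$, hence $W \equiv \mu\, I$ pointwise. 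The Codazzi identity then forces $\mu$ constant, so $D^2 u = (u+\mu)\, I$, an Obata-type equation whose solutions on a bounded domain are radial; the boundary data pin down $u(r) = (\cosh r)/(\cosh R) - 1$ and $\Omega$ as a geodesic ball centered at the origin.

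The principal obstacle is the derivation of the hyperbolic Rellich--Pohozaev identity for the Hessian quotient: the passage from $\R^n$ to $\mathbb H^n$ introduces Riemann curvature corrections at every integration by parts, and one must verify that the specific combination of the static potential $\cosh r$, the conformal vector field $Z$, and the curvature-corrected Hessian $W = D^2u - uI$ conspires to cancel all of these corrections. The Codazzi property of $W$ in constant sectional curvature (which fails for $D^2u$ alone) is the geometric feature that makes the identity close up cleanly; once it is in hand, the rigidity step proceeds in direct analogy with the Euclidean argument.
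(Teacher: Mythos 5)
Your plan follows essentially the same route as the paper: the P-function $|Du|^2-u^2-2u$ (yours differs only by an additive constant), the static potential $V=\cosh r$ with $D^2V=VI$, the divergence-free Newton tensors of $D^2u-uI$, a weighted Rellich--Pohozaev identity whose boundary terms are absorbed via the Minkowski integral formula, the Newton--Maclaurin inequalities, and an Obata-type rigidity step. The only inaccuracy is schematic: the identity actually obtained is not $\int_\Omega V\,L(P)\,dV=0$ but an integral identity relating $\int_\Omega S_{k-1}(|Du|^2-u^2-1)(\cdots)V$ to $\int_\Omega S_k\,uV$, which must be combined with $P\le\max_{\partial\Omega}P$, the sign $u<0$, and the Maclaurin inequalities to force equality in $S_k=\frac{n-k+1}{k}S_{k-1}$, after which the rigidity concludes as you describe.
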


In the third part, we consider Hessian quotient curvature equations in the Euclidean space $\mathbb R^n$,
\begin{align}\label{mainpb3}
    \begin{cases}
    S_k(D(\frac{Du}{w}))=\frac{C_n^k}{C_n^l}S_l(D(\frac{Du}{w}))&\quad\text{in }\Omega,\\
    u=0&\quad\text{on }\partial\Omega,\\
    \frac{\partial u}{\partial \gamma}=1&\quad\text{on }\partial\Omega.
    \end{cases}
\end{align}
Our result is as follows.
\begin{theorem}\label{mainthm3}
Let $\Omega$ be a $C^2$ bounded domain in $\mathbb R^n$,  $u\in C^3(\Omega)\cap C^2(\overline\Omega)$ be a solution to \eqref{mainpb} with the integer $k, l$ satisfying $0\le l< k\le n$ and $S_{l}(D(\frac{Du}{w}))>0$ in $\overline\Omega$. Then upto a translation $u=-\sqrt{1-|x|^2}+\frac1{\sqrt{2}}$ and $\Omega$ is the ball of radius $\frac1{\sqrt2}$.
\end{theorem}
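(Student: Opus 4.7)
The plan is to transfer the P-function and Rellich--Pohozaev strategy used in Theorem \ref{mainthm} to the graph setting. Write $w := \sqrt{1+|Du|^2}$ and $A_{ij} := \partial_j(u_i/w)$; then the eigenvalues of $A$ are the principal curvatures of the graph $\{(x,u(x))\}\subset\R^{n+1}$. For the candidate solution $u = 1/\sqrt 2 - \sqrt{1-|x|^2}$ one checks $u_i/w = x_i$, so $A=I$ identically and the graph lies on the unit sphere in $\R^{n+1}$ centered at $(0,1/\sqrt 2)$; its intersection with $\{u=0\}$ projects to $\{|x|=1/\sqrt 2\}$. As in Theorem \ref{mainthm}, the hypothesis $S_l(A)>0$ in $\overline\Omega$ together with the equation yields $k$-admissibility ($S_j(A)>0$ for $1\le j\le k$), so $F:=S_k/S_l$ is elliptic and $F^{1/(k-l)}$ is concave on the admissible cone.

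Motivated by the identity $(u-1/\sqrt 2)^2 + |x|^2 = 1$ satisfied by the candidate, and by the fact that $1/w$ is the $(n+1)$-component of the upward unit normal of the graph, set
$$P := u + \frac{1}{w} - \frac{1}{\sqrt 2}.$$
The boundary conditions $u=0$, $|Du|=1$ give $w=\sqrt 2$, so $P\equiv 0$ on $\partial\Omega$. On the spherical cap one has $1/w = \sqrt{1-|x|^2} = 1/\sqrt 2 - u$, so $P\equiv 0$ throughout $\Omega$ as well; thus $P$ truly measures deviation from the expected solution. A direct computation using $\sum_k u_k A_{ki} = w_i/w^2$ (a consequence of $ww_i = \sum_k u_k u_{ki}$) gives the compact form $P_i = u_i - (uA)_i$, which will be useful when contracting against the linearized operator.

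The proof then splits into two pieces. First, a Rellich--Pohozaev integral identity is obtained by multiplying the equation $S_k(A) = (C_n^k/C_n^l)S_l(A)$ by a graph-adapted analogue of the vector field $\langle x, Du\rangle$ used in \cite{Salani2008, Safoui} and integrating by parts against the Newton tensor $T_{k-1}^{ij}(A)$, whose divergence vanishes by the Codazzi equations of the graph hypersurface; after substituting $u=0$, $u_\gamma=1$, and $S_k/S_l = C_n^k/C_n^l$, the boundary terms collapse to an explicit constant. Second, a maximum principle for $P$: computing $F^{ij}P_{ij}$ with $F^{ij} = \partial F/\partial A_{ij}$ and using the divergence-free property of $T_{k-1}(A)$ together with the concavity of $F^{1/(k-l)}$ should yield a differential inequality of the form $F^{ij}P_{ij} \le \langle b, DP\rangle$ for some bounded vector field $b$. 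Combining the two forces $P\equiv 0$ in $\Omega$, i.e.\ $u + 1/w \equiv 1/\sqrt 2$, which characterizes spherical caps; hence $\Omega$ is a ball of radius $1/\sqrt 2$ and $u$ has the claimed form up to translation.

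The main obstacle is the P-function differential inequality. In the purely Hessian case (Theorem \ref{mainthm}), $P = |Du|^2 - 2u$ pairs cleanly with Newton's inequalities; here the dependence on $1/w$ mixes first and second derivatives through the graph metric $g_{ij} = \delta_{ij} + u_iu_j$, and expanding $P_{ij}$ produces cross terms of the form $u_{ik}u_{jk}/w^3$ that must be recognized as components of $A^2$ (after accounting for the metric) and absorbed using a Newton-type trace inequality together with the concavity of $F^{1/(k-l)}$. Carrying out the computation covariantly on the graph, so that $A$ appears as the intrinsic shape operator and the Newton identities take their standard form, is expected to be the cleanest route; the corresponding Codazzi identity is also what makes the Rellich--Pohozaev integration by parts go through.
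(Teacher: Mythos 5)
Your overall strategy coincides with the paper's: the P function $\frac1w+u$ (your $P$ differs only by the constant $\frac1{\sqrt2}$), a Pohozaev-type identity obtained by pairing the equation with the position field against the divergence-free Newton tensor of the graph, and the Minkowski-type boundary identity of Lemma \ref{lemma::curkl::12}. However, the two steps you leave open are exactly where the work is, and as written they are gaps. First, the differential inequality for $P$. You conjecture $F^{ij}P_{ij}\le\langle b,DP\rangle$ and propose to invoke concavity of $(S_k/S_l)^{1/(k-l)}$; in fact neither the drift term nor concavity is needed, and the computation is cleaner than you anticipate. Working covariantly on the graph as you yourself suggest, $P=\langle \vec N,e_{n+1}\rangle+\langle \vec X,e_{n+1}\rangle$ and the Codazzi equation give
\[
P_{ij}=-h_{ij,l}\langle e_l,e_{n+1}\rangle+\tfrac1w\bigl(h_{ij}-h_{is}h_{sj}\bigr);
\]
the first-order term vanishes upon contraction because differentiating the equation yields $F^{ij}h_{ij,l}=0$, and then $F^{ij}(h_{ij}-h_{is}h_{sj})\le0$ follows from the two Newton--MacLaurin bounds $(l+1)S_{l+1}/S_l\ge n-l$ and $(k+1)S_{k+1}/S_k\le n-k$ (as in \eqref{kl::5} and \eqref{kl::6}). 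So $F^{ij}P_{ij}\le0$ exactly; note that this yields a \emph{minimum} principle, i.e.\ $\frac1{\sqrt2}-\frac1w\le u$ in $\Omega$, which is the inequality actually needed, not a maximum principle as you phrase it.

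Second, the endgame. The assertion that the two ingredients force $P\equiv0$ and that ``$P\equiv 0$ characterizes spherical caps'' is not a proof: $u+\frac1w\equiv\frac1{\sqrt2}$ is a first-order eikonal-type relation, and by itself it does not identify $\Omega$ as a ball without a separate argument (a priori it only says a function of $u$ is a distance function, with the usual cut-locus issues). What the combination of the Pohozaev identity, Lemma \ref{lemma::curkl::12} and the pointwise bounds actually delivers is a two-sided sandwich of the integral identity around $\frac{(n-k+1)(k-l)}{k}\int_\Omega uS_{k-1}\,\mathrm dx$, using $\frac1{\sqrt2}-\frac1w\le u<0$ together with $\frac{S_{l-1}}{S_{k-1}}\frac{(n-l+1)C_n^k}{(n-k+1)C_n^l}\le\frac lk$ and $S_k\le\frac{n-k+1}kS_{k-1}$. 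Equality throughout forces $S_k=\frac{n-k+1}kS_{k-1}$ pointwise, whence by \eqref{NMieq} all principal curvatures of the graph equal $1$; this umbilicity, combined with the boundary data, is what pins down $u=-\sqrt{1-|x|^2}+\frac1{\sqrt2}$ and $\Omega=B_{1/\sqrt2}$. You should route the rigidity through this equality case of the MacLaurin inequality (or, equivalently, through the equality case of $F^{ij}P_{ij}=0$) rather than through the first-order relation alone.
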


In the  Section 2, we recall some notations concerning the Hessian operator in the Euclidean space and the hyperbolic space and some known facts about Weingartem hypersurfaces in the Euclidean space. Some known facts about curvature of level sets and Minkowskian integral formulas are also introduced there. In  Section \ref{sec3}, we first prove  a Rellich-Pohozaev type identity for  Problem \eqref{mainpb} in the Euclidean space. Then by the Rellich-Pohozaev type identity and the P function, we prove the Theorem \ref{mainthm} in the Euclidean space. In Section \ref{sec4}, we  derive the Rellich-Pohozaev type identity for Problem \eqref{mainpb2} in the hyperbolic space, then combing with a P function, we  give the proof of Theorem \ref{mainthm2}. In the last section, we prove Theorem \ref{mainthm3}.

\section{Preliminaries}\label{sec2}

\subsection{Elementary symmetric functions}
Let $\mathcal S^n$ be the space of real symmetric $n\times n$ matrices. We denote by $A=(a_{ij})$ a matrix in $\mathcal S^n$, and by $\lambda_1,\cdots, \lambda_n$ the eigenvalues of $A$. For $1\leq k\leq n$, we recall the definition of $k$-th elementary symmetric functions of $A$,

\begin{equation} S_k(A)=S_k(\lambda_1,\cdots,\lambda_n)=\sum_{1\leq i_1<\cdots<i_k\leq n}\lambda_{i_1}\cdots\lambda_{i_k}.\end{equation}

Denote by
\begin{equation}S_k^{ij}(A):=\frac{\partial S_k(A)}{\partial a_{ij}},\end{equation}
then it is easy to see from the definition above that
\begin{equation}\label{kl::1}
\begin{aligned}
\sum_{i,j=1}^n S_k^{ij}(A)a_{ij}=&kS_k(A),\quad\text{and}\quad 
\sum_{i=1}^n S_k^{ii}(A)=&(n-k+1)S_{k-1}(A).
\end{aligned}\end{equation}

For $1\leq k\leq n-1$, Newton's inequalities say that
\begin{equation}\label{Newton}(n-k+1)(k+1)S_{k-1}(A)S_{k+1}(A)\leq k(n-k)S_k^2(A).\end{equation}

For $1\leq k\leq n$, recall that the Garding cone is defined as
\begin{equation}\Gamma_k=\{A\in\mathcal S^n: S_1(A)>0,\cdots,  S_k(A)>0\}.\end{equation}
For $A\in\Gamma_k$ and $k>l\geq 0$, $r>s\geq 0$, $k\geq r$, $l\geq s$, we have the following Mclaurin inequalities
\begin{align}\label{gNMieq}
    \bigg(\frac{S_k(A)/C_n^k}{S_l(A)/C_n^l}\bigg)^\frac1{k-l}\leq \bigg(\frac{S_r(A)/C_n^r}{S_s(A)/C_n^s}\bigg)^\frac1{r-s}.
\end{align}
In particular, 
\begin{equation}\label{MacLaurin}
\bigg(\frac{S_k(A)}{C_n^k}\bigg)^{\frac{1}{k}}\leq \bigg(\frac{S_l(A)}{C_n^l}\bigg)^{\frac{1}{l}},\quad\forall\ k\geq l\geq 1,\end{equation}
and
\begin{equation}\label{NMieq}
    \frac{{S_k(A)}/{C_n^k}}{{S_{k-1}(A)}/{C_n^{k-1}}}\leq \frac{{S_l(A)}/{C_n^l}}{{S_{l-1}(A)}/{C_n^{l-1}}},\quad\forall \ k\geq l\geq 1.
\end{equation}
By \eqref{gNMieq}, we have 
\begin{align}\label{gNMieq1}
\frac{S_{k+1}(A)/C_n^{k+1}}{S_k(A)/C_n^k}\leq \bigg(\frac{S_k(A)/C_n^k}{S_l(A)/C_n^l}\bigg)^\frac1{k-l}.
\end{align}
The equalities in \eqref{gNMieq}, \eqref{MacLaurin}, \eqref{NMieq} and  \eqref{gNMieq1}   hold if the eigenvalues $\lambda_1,\cdots,\lambda_{n}$ of $A$ are equal to each other.
We need the following useful proposition which can be found in \cite{Reilly1974}.
\begin{proposition} \label{keyprop}
For any $n\times n$  matrix $A$, we have
\begin{equation}\label{prop2.1}
    S_k^{ij}(A)=S_{k-1}(A)\delta_{ij}-\sum_{l=1}^nS_{k-1}^{il}(A)a_{jl}.
\end{equation}
\end{proposition}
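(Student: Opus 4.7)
The plan is to reduce the identity to the case where $A$ is diagonal and then verify it by a direct computation. It is convenient to rewrite the claim in matrix form: letting $T_k(A)$ denote the matrix with $(i,j)$-entry $S_k^{ij}(A)$, the identity \eqref{prop2.1} reads
\begin{equation*}
T_k(A) \;=\; S_{k-1}(A)\, I \;-\; T_{k-1}(A)\, A^T,
\end{equation*}
which for a symmetric $A$ (the only case needed in the applications below) collapses to $T_k = S_{k-1}I - T_{k-1}A$. Since both sides of \eqref{prop2.1} are polynomial in the entries of $A$, it is enough to handle the symmetric case and then invoke polynomial continuation; more precisely, any symmetric $A$ diagonalizes as $A=O\Lambda O^T$ with $O\in O(n)$ and $\Lambda=\mathrm{diag}(\lambda_1,\dots,\lambda_n)$, and a direct chain-rule computation using $S_k(A)=S_k(\Lambda)$ shows that $T_k(O\Lambda O^T)=O\,T_k(\Lambda)\,O^T$. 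Thus both sides of the displayed matrix identity transform by the same orthogonal conjugation, and it suffices to verify the identity for $A=\Lambda$ diagonal.

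For diagonal $\Lambda$ the derivatives are governed by the standard formula $S_k^{ij}(\Lambda)=\delta_{ij}\,S_{k-1}(\lambda|i)$, where $\lambda|i:=(\lambda_1,\dots,\widehat{\lambda_i},\dots,\lambda_n)$; in particular $T_k(\Lambda)$ is diagonal, and the off-diagonal case of \eqref{prop2.1} is immediate. For the diagonal entries, the right-hand side equals
\begin{equation*}
S_{k-1}(\lambda) - \lambda_i\, S_{k-2}(\lambda|i),
\end{equation*}
while splitting $S_{k-1}(\lambda)$ according to whether the index $i$ is included in the defining multi-indices gives the combinatorial identity $S_{k-1}(\lambda)=S_{k-1}(\lambda|i)+\lambda_i\,S_{k-2}(\lambda|i)$. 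Substituting, the right-hand side collapses to $S_{k-1}(\lambda|i)$, which coincides with the diagonal entry of $T_k(\Lambda)$, completing the verification.

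There is no genuine analytic obstacle; the only place where care is required is the index bookkeeping when translating between the coordinate form in \eqref{prop2.1} and the matrix identity $T_k=S_{k-1}I-T_{k-1}A^T$, in particular identifying the sum $\sum_l S_{k-1}^{il}(A)\,a_{jl}$ with $(T_{k-1}A^T)_{ij}$. An alternative route, which avoids the reduction to the diagonal case and works for arbitrary (not necessarily symmetric) $A$, is to write $S_k(A)=\tfrac{1}{k!}\,\delta^{i_1\cdots i_k}_{j_1\cdots j_k}\,a_{i_1j_1}\cdots a_{i_kj_k}$ via the generalized Kronecker delta and then expand $\delta^{i\,i_2\cdots i_k}_{j\,j_2\cdots j_k}$ along its first row; the $\delta^i_j$-term produces $S_{k-1}(A)\delta_{ij}$ and the remaining $k-1$ alternating terms, after relabeling the dummy summation indices, combine into $-\sum_l S_{k-1}^{il}(A)\,a_{jl}$. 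This combinatorial argument is slightly more tedious but makes the general matrix case completely transparent.
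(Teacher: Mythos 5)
The paper offers no proof of this proposition at all --- it simply cites Reilly's 1974 paper --- so the only thing to assess is your argument on its own terms. Your computation in the symmetric case is correct: the equivariance $T_k(O\Lambda O^T)=O\,T_k(\Lambda)\,O^T$ follows from differentiating $S_k(O A O^T)=S_k(A)$ (since $S_k^{ij}$ is the derivative with respect to all $n^2$ independent entries, there is no factor-of-two issue), and the diagonal verification via $S_{k-1}(\lambda)=S_{k-1}(\lambda|i)+\lambda_i S_{k-2}(\lambda|i)$ is exactly right.

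The genuine gap is the claim that ``polynomial continuation'' upgrades the symmetric case to arbitrary $A$, which is what the proposition asserts. The symmetric matrices form a proper linear subspace of $M_n(\mathbb R)$, not a Zariski-dense subset, so a polynomial identity verified there does not propagate: the polynomial $a_{12}-a_{21}$ vanishes identically on $\mathcal S^n$ without vanishing on $M_n(\mathbb R)$. Two repairs are available. (i) Replace orthogonal conjugation by arbitrary invertible conjugation: differentiating $S_k(PAP^{-1})=S_k(A)$ gives $T_k(PAP^{-1})=(P^T)^{-1}T_k(A)P^T$, and since $(PAP^{-1})^T=(P^T)^{-1}A^T P^T$, the whole identity $T_k=S_{k-1}I-T_{k-1}A^T$ is equivariant under $A\mapsto PAP^{-1}$; verifying it for diagonal $\Lambda$ then yields it for all diagonalizable matrices, which \emph{are} Zariski-dense, so continuation legitimately finishes. (ii) Carry out in full the generalized-Kronecker-delta expansion you sketch at the end; that route (essentially Reilly's) handles arbitrary $A$ directly and is consistent with the sanity check $k=2$, where the right-hand side reduces to $S_1(A)\delta_{ij}-a_{ji}$. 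As a mitigating remark, every invocation of \eqref{prop2.1} in this paper takes $A=D^2u$ or $A=D^2u-uI$, which are symmetric (the non-symmetric curvature case is covered separately by Proposition \ref{prop::2.2} and the result quoted from \cite{Xia2021}), so your argument does cover all the uses here --- but it does not prove the proposition as stated.
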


In the following we write $D, D^2$ and $\Delta$ for the gradient, Hessian and Laplacian on $\mathbb R^n$. 
We also follow Einstein's summation convention.

\subsection{Hessian operators}
\subsubsection{Hessian operators in Euclidean space}
Let $\Omega$ be an open subset of $\mathbb R^n$ and let $u\in C^2(\Omega)$. The $k$-Hessian operator $ S_k(D^2u)$ is defined as the $k$-th elementary symmetric function of $D^2u$. Notice that 
\begin{equation} 
S_1(D^2u)=\Delta u\quad\text{and}\quad  S_n(D^2u)=\mathrm{det}D^2u.
\end{equation}

A function $u$ is called $k$-convex in $\Omega$, if $D^2u(x)\in \Gamma_k$ for any $x\in \Omega$. A direct computation yields that $(S_k^{1j}(D^2u),\cdots,S_k^{nj}(D^2u))$ is divergence free, that is 
\begin{equation}\label{f2.10}
    \frac{\partial}{\partial x_i}S_k^{ij}(D^2u)=0.
\end{equation}
By using \eqref{prop2.1}, it is easy to see that
\begin{equation}\label{f2.11}
    S_k^{ij}(D^2u)u_{il}=S_k^{il}(D^2u)u_{ij}.
\end{equation}

\subsubsection{Hessian operators in hyperbolic space}
Let $\Omega$ be an open subset of $\mathbb H^n$ and let $u\in C^2(\Omega)$. The $k$-Hessian operator $S_k[u]$ is defined as the $k$-th elementary symmetric function $\sigma_k(D^2u-uI)$ of $D^2u-uI$. Notice that 
 \begin{equation}S_1[u]=\Delta u-nu\quad\text{and}\quad S_n[u]=\mathrm {det}(D^2u-uI).\end{equation}

 A function $u$ is called \textit{$k$-admissible} in $\Omega$ if $D^2u(x)-u(x)I\in\Gamma_k$ for any $x\in \Omega$. A function $u\in C^2(\Omega)$ is called an \textit{admissible solution} of $S_k[u]=f$ in $\Omega$, if $u$ solves the equation and $u$ is an $k$-admissible function.

We list the following propositions proven in \cite{GJY2022}, which will be used in  Section \ref{sec4}.

 \begin{proposition}\label{propdivfree}Suppose $u\in C^3(\Omega)$, then
 \begin{equation}\label{divfree}
     D_i(\sigma_k^{ij}(D^2u-uI))=0.
 \end{equation}
 \end{proposition}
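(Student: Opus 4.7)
The plan is to prove the identity by induction on $k$, mirroring the Euclidean argument for \eqref{f2.10} but accounting for the nontrivial curvature of $\mathbb H^n$. The base case $k = 1$ is immediate: since $\sigma_1(W) = g^{ij}W_{ij} = \Delta u - nu$ with $W_{ij} := u_{;ij} - u g_{ij}$, we have $\sigma_1^{ij}(W) = g^{ij}$ and $D_i g^{ij} = 0$ by the metric-compatibility of the Levi--Civita connection.

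For the inductive step, work in a local orthonormal frame at an arbitrary point and apply the pointwise Newton identity of Proposition \ref{keyprop} to $W$:
\begin{equation*}
\sigma_k^{ij}(W) \;=\; \sigma_{k-1}(W)\, g^{ij} \;-\; \sigma_{k-1}^{il}(W)\, W_{l}{}^{j}.
\end{equation*}
Take the covariant divergence in $i$. By the Leibniz rule, the term $W_{l}{}^{j}\, D_i \sigma_{k-1}^{il}(W)$ vanishes by the induction hypothesis. Expanding $D_i \sigma_{k-1}(W) = \sigma_{k-1}^{pq}(W)\, D_i W_{pq}$ via the chain rule and substituting $D_i W_{pq} = u_{;pqi} - u_{;i}\, g_{pq}$, the claim reduces, after an index relabeling using the symmetry of $\sigma_{k-1}^{pq}$, to an identity of the schematic form
\begin{equation*}
\sigma_{k-1}^{pq}(W)\bigl(u_{;pq}{}^{j} - u_{;p}{}^{j}{}_{q}\bigr) \;=\; \text{(linear-in-}Du\ \text{correction produced by the }-u g_{pq}\text{ terms)}.
\end{equation*}

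The main obstacle, and the structural content of the proposition, is verifying this cancellation via the Ricci identity. Since $\mathbb H^n$ has constant sectional curvature $-1$, the Riemann tensor is $R_{ijkl} = -(g_{ik}g_{jl} - g_{il}g_{jk})$, so commuting the last two covariant derivatives of $u_{;p}$ produces exactly the terms $g_{pq}u^{;j} - \delta^{j}_{p} u_{;q}$ (up to sign). Contracting with $\sigma_{k-1}^{pq}(W)$ and using the trace formula $\sigma_{k-1}^{ij}(W) g_{ij} = (n-k+2)\sigma_{k-2}(W)$ from \eqref{kl::1} together with the symmetry relation of the Newton tensor inherited from \eqref{f2.11}, the curvature contributions match precisely the corrections from the $-u g_{pq}$ piece inside $W$, and the full divergence collapses to zero. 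This delicate matching is what singles out $D^2 u - u I$ as the natural Hessian operator on a space of constant curvature $-1$, and it is the only nontrivial step in the argument.
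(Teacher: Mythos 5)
The paper does not actually prove Proposition \ref{propdivfree}; it is quoted from \cite{GJY2022} without proof, so there is no in-paper argument to compare against line by line. Judged on its own, your proposal is the standard and correct argument: induction on $k$ via the Newton identity of Proposition \ref{keyprop}, with the base case $\sigma_1^{ij}=g^{ij}$, the inductive hypothesis killing the term $W_l{}^j D_i\sigma_{k-1}^{il}$, and the remaining cancellation coming from the Ricci identity on a space of constant curvature $-1$. The one place where you leave the argument schematic is the "delicate matching" at the end; it is worth recording that this matching is exactly the statement that $W=D^2u-uI$ is a Codazzi tensor on $\mathbb H^n$, i.e. $D_kW_{ij}=D_iW_{kj}$, since the Ricci identity $u_{;ijk}-u_{;ikj}=u_{;k}g_{ij}-u_{;j}g_{ik}$ on $\mathbb H^n$ is precisely compensated by the derivatives of the $-ug_{ij}$ term. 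Once the Codazzi property is in hand, the two surviving terms in the inductive step cancel immediately, because $\sigma_{k-1}^{il}D_iW_l{}^j=\sigma_{k-1}^{il}D^jW_{li}=D^j\sigma_{k-1}$ by the chain rule; organized this way you do not even need the trace formula $\sigma_{k-1}^{ij}g_{ij}=(n-k+2)\sigma_{k-2}$, although it does appear if you instead expand all commutators term by term as you describe. Either organization closes the argument, so I regard the proposal as correct.
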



 \begin{proposition}
 Let $u\in C^2(\Omega)$, then
 \begin{equation}\label{changeindex}
     \sigma_k^{ij}(D^2u-uI)u_{il}=\sigma_k^{il}(D^2u-uI)u_{ij}.
 \end{equation}
 \end{proposition}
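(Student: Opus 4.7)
The plan is to reduce this hyperbolic-space identity to a purely algebraic fact: for any symmetric matrix $A$, the derivative matrix $B := (\sigma_k^{ij}(A))$ commutes with $A$.

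First, writing $A_{ij}:=u_{ij}-u\delta_{ij}$ (so that $u_{ij}=A_{ij}+u\delta_{ij}$), I would expand both sides of the claimed identity \eqref{changeindex}. The contributions of the form $\sigma_k^{ij}(A)\cdot u\delta_{il}$ on the left and $\sigma_k^{il}(A)\cdot u\delta_{ij}$ on the right both contract to $u\,\sigma_k^{jl}(A)$ after summing over $i$ (using $\sigma_k^{jl}=\sigma_k^{lj}$ by the symmetry of $\sigma_k^{ij}$ in its upper indices), so these terms cancel. Hence it suffices to prove the algebraic identity
\[
\sigma_k^{ij}(A)\,A_{il}=\sigma_k^{il}(A)\,A_{ij}
\]
for any symmetric matrix $A$, which is the analogue of \eqref{f2.11} with $D^2u$ replaced by $A$.

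This identity is precisely the statement that $(BA)_{jl}$ is symmetric in its free indices $j,l$, where $B_{ij}:=\sigma_k^{ij}(A)$. Since $A$ and $B$ are both symmetric, this is equivalent to $AB=BA$. To verify commutativity, I would use the $O(n)$-invariance of $\sigma_k$: for any orthogonal $P$, $\sigma_k(P^T A P)=\sigma_k(A)$, which forces $B$ to transform as a $(0,2)$-tensor under $A\mapsto P^T A P$. Diagonalizing $A$ orthogonally, a direct computation from the definition of $\sigma_k^{ij}$ shows that $B$ is also diagonal in the same frame, since the off-diagonal derivatives of $\sigma_k$ at a diagonal matrix vanish. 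Hence $A$ and $B$ are simultaneously diagonalized, and commute. A frame-free alternative is a short induction on $k$ using the Newton-Reilly identity \eqref{prop2.1}: substituting the recursion into $\sigma_k^{ij}(A)A_{il}$ yields a term $\sigma_{k-1}(A)A_{jl}$ that is manifestly symmetric in $j,l$, plus a term whose symmetry in $j,l$ follows from the $(k-1)$-case; the base case $\sigma_1^{ij}(A)=\delta_{ij}$ is trivial.

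I do not anticipate a substantive obstacle here: once the $u\delta_{ij}$ pieces are peeled off, the statement reduces to the well-known commutation of a symmetric matrix $A$ with its cofactor-type matrix $(\sigma_k^{ij}(A))$, a fact essentially equivalent to \eqref{f2.11} in the Euclidean setting. The only care needed is to verify that the $u\delta_{il}$ and $u\delta_{ij}$ terms arising from the shift $A=D^2u-uI$ cancel against each other, which they do by the $(i,j)$-symmetry of $\sigma_k^{ij}$.
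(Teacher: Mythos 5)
Your proposal is correct: the $u\delta_{il}$ and $u\delta_{ij}$ pieces do cancel by the symmetry of $\sigma_k^{ij}$, and the remaining identity $\sigma_k^{ij}(A)A_{il}=\sigma_k^{il}(A)A_{ij}$ follows either from simultaneous diagonalization or from the induction via \eqref{prop2.1}, both of which are sound. The paper itself gives no proof here (it cites \cite{GJY2022}), but its justification of the Euclidean analogue \eqref{f2.11} is exactly your second route through Proposition \ref{keyprop}, so your argument is essentially the paper's approach with the additional (correctly handled) bookkeeping for the $-uI$ shift.
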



\subsection{Weingarten hypersurfaces}
Let $\mathcal M$ be a hypersurface in $\mathbb R^{n+1}$, which is locally represented as a graph $x_{n+1}=u(x_1,\cdots, x_n$, $x=(x_1,\cdots,x_n)\in\Omega$, where $\Omega\subset \mathbb R^n$. Denote $u_i=\frac{\partial u}{\partial x_i}$, $u_{ij}=\frac{\partial^2 u}{\partial x_i\partial x_j}$, and $Du=(u_1,\cdots, u_n)$. Then $\nu=\frac{(-Du,1)}{\sqrt{1+|Du|^2}}$ is the outer unit normal of $\mathcal M$. Denote $w=\sqrt{1+|Du|^2}$. The furst and secondfundamental forms can be respectively expressed as
\begin{align}
    g_{ij}=\delta_{ij}+u_iu_j,\quad\text{and}\quad
    b_{ij}=\frac{u_{ij}}{w}.
\end{align}
Then the principal curvatures $\lambda _1,\cdots, \lambda_n$ of $\mathcal M$ are the eigenvalues of the second fundamental form relative to the first fundamental form, i.e. the eigenvalues of $g^{ik}b_{kj}=:a_{ij}$, where $g^{ij}=\delta_{ij}-\frac{u_i}{u_j}{w^2}$ is the inverse mateix to $g_{ij}$.
Hence
\begin{align}
    a_{ij}(u)=g^{ik}b_{kj}=\frac{u_{ij}}{w}-\frac{u_iu_ku_{kj}}{w^3}=\big(\frac{u_i}{w}\big)_j,
\end{align}
where $\big(\frac{u_i}{w}\big)_j=\frac{\partial }{\partial x_j}\big(\frac{u_i}{w}\big)$. We say $u$ is $k$-admissible if $a_{ij}(u)\in\Gamma_k$.

Reilly [Reilly1973] gave the following result.
\begin{proposition}
Suppose $u\in C^3(\Omega)$, $A_{ij}=(\frac{u_i}{w})_j$. Then
\begin{align}
    D_jS_k^{ij}(A)=0.
\end{align}
\end{proposition}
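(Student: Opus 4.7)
The plan is to adapt the inductive argument that establishes the corresponding divergence-free property \eqref{f2.10} for the Euclidean Hessian (and its hyperbolic analogue in Proposition \ref{propdivfree}) to the present setting, where the matrix $A=(A_{ij})$ with $A_{ij}=(u_i/w)_j$ is no longer symmetric. The first thing I would record is a Codazzi-type symmetry that replaces the full symmetry $u_{ijk}=u_{jik}=u_{ikj}$ used in the Hessian case. Since $A_{ij}$ is itself a first partial derivative of $u_i/w$, we have
\[D_k A_{ij}=\partial_k\partial_j(u_i/w)=D_j A_{ik},\]
so $D_k A_{ij}$ is symmetric in the second index $j$ and the derivative index $k$. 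This is the structural input that drives the proof.

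Next I would induct on $k$. The base case $k=1$ is immediate, as $S_1^{ij}(A)=\delta_{ij}$ gives $D_j S_1^{ij}(A)=0$. For the inductive step, assuming $D_j S_{k-1}^{ij}(A)=0$, I apply Proposition \ref{keyprop} to get
\[S_k^{ij}(A)=S_{k-1}(A)\delta_{ij}-S_{k-1}^{il}(A)A_{jl},\]
and differentiate in $j$:
\[D_j S_k^{ij}(A)=D_i S_{k-1}(A)-\bigl(D_j S_{k-1}^{il}(A)\bigr)A_{jl}-S_{k-1}^{il}(A)D_j A_{jl}.\]
I would then expand $D_i S_{k-1}(A)=S_{k-1}^{pq}(A)D_i A_{pq}$ and $D_j S_{k-1}^{il}(A)=S_{k-1}^{il,pq}(A)D_j A_{pq}$, use the Codazzi identity $D_i A_{pq}=D_q A_{pi}$ to realign derivative indices, and then exploit the inductive hypothesis on the lower-order Newton tensor $S_{k-1}^{il}(A)$ to force the remaining sums to cancel in pairs.

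The main obstacle will be the bookkeeping: because $A$ is not a symmetric matrix, one cannot freely interchange indices on $S_k^{ij}(A)$ or on its second derivatives $S_k^{ij,pq}(A)$, and the Codazzi identity is only a partial symmetry. To avoid getting lost in indices, an equivalent but cleaner route is to write
\[S_k(A)=\frac{1}{k!}\,\delta^{i_1\cdots i_k}_{j_1\cdots j_k}A_{i_1}{}^{j_1}\cdots A_{i_k}{}^{j_k},\]
using the generalized Kronecker delta; then $S_k^{\,i}{}_{j}(A)$ has a clean expression with built-in antisymmetrization on the free indices, and the single Codazzi identity $D_i A_{jk}$ symmetric in $(i,k)$ makes $D_j S_k^{ij}(A)$ vanish term-by-term because one is contracting a totally antisymmetric object against a derivative symmetric in exactly the two indices that get swapped. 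This second route avoids induction entirely and isolates the one ingredient that really matters, namely the Codazzi-type symmetry of $DA$.
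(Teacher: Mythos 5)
Your proposal is essentially correct, and it supplies a proof where the paper gives none: the paper simply attributes this proposition to Reilly and does not prove it, so there is no in-paper argument to compare against. The decisive ingredient you identify is exactly right: since $A_{ij}=\partial_j(u_i/w)$, the derivative $D_kA_{ij}=\partial_k\partial_j(u_i/w)$ is symmetric in $j$ and $k$, and this Codazzi-type symmetry is all that is needed. Your second route is the standard, complete argument: writing $S_k(A)=\frac{1}{k!}\delta^{i_1\cdots i_k}_{j_1\cdots j_k}A_{i_1j_1}\cdots A_{i_kj_k}$ (valid for any square matrix, symmetric or not), one gets
\begin{equation*}
D_jS_k^{ij}(A)=\frac{k-1}{(k-1)!}\,\delta^{i\,i_2\cdots i_k}_{j\,j_2\cdots j_k}\,(D_jA_{i_2j_2})\,A_{i_3j_3}\cdots A_{i_kj_k},
\end{equation*}
and the contraction vanishes because the generalized Kronecker delta is antisymmetric in the lower indices $j,j_2$ while $D_jA_{i_2j_2}$ is symmetric in them. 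By contrast, your first (inductive) route via Proposition \ref{keyprop} is left genuinely incomplete: the term $\bigl(D_jS_{k-1}^{il}(A)\bigr)A_{jl}$ does not reduce to the inductive hypothesis $D_jS_{k-1}^{ij}(A)=0$ (the derivative index and the contracted index of $A$ are not aligned), and ``cancel in pairs'' is asserted rather than shown; for a non-symmetric $A$ this bookkeeping is delicate. Since you explicitly flag this and fall back on the Kronecker-delta argument, the proposal as a whole stands, but in a final write-up you should present only the second route.
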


For general (non-symmeric) matrices, Pietra, Gavitone and Xia proved the following result in \cite{Xia2021}.
\begin{proposition}
For any $n\times n$ matrix $A=(a_{ij})$, we have
\begin{equation}
    S_k^{ij}(A)=S_{k-1}(A)\delta_{ij}-\sum_{l=1}^nS_{k-1}^{il}(A)a_{jl}.
\end{equation}
\end{proposition}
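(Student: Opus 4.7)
The plan is to derive the identity from the generating-function relation $\det(I+tA)=\sum_{k=0}^n t^kS_k(A)$, which makes no use of symmetry of $A$. First I would differentiate both sides in $a_{ij}$ using Jacobi's formula $\partial\det(M)/\partial m_{ij}=\mathrm{adj}(M)_{ji}$ together with the chain rule applied to the map $A\mapsto I+tA$, obtaining
\[
\sum_{k\ge 1}t^kS_k^{ij}(A)=t\cdot \mathrm{adj}(I+tA)_{ji}.
\]
The transposition of indices built into Jacobi's formula is precisely what will account, at the end, for the pattern $a_{jl}S_{k-1}^{il}$ (and not $a_{il}S_{k-1}^{jl}$) on the right-hand side of the claimed identity.

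Next, I would compute the adjugate through $\mathrm{adj}(I+tA)=\det(I+tA)\cdot(I+tA)^{-1}$, using the formal Neumann expansion $(I+tA)^{-1}=\sum_{m\ge 0}(-t)^m A^m$. Multiplying this against $\det(I+tA)=\sum_j t^jS_j(A)$ and collecting powers of $t$ yields
\[
\mathrm{adj}(I+tA)=\sum_{k\ge 0}t^k T_k(A),\qquad T_k(A):=\sum_{m=0}^{k}(-1)^m S_{k-m}(A)\,A^m.
\]
Because each $S_k^{ij}(A)$ is a polynomial in the entries of $A$, the formal-power-series manipulation is legitimate as an equality of polynomial coefficients. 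Matching the coefficient of $t^k$ in the two displayed equations gives the clean identification
\[
S_k^{ij}(A)=\bigl(T_{k-1}(A)\bigr)_{ji}.
\]

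Finally, a direct rearrangement of the definition of $T_k(A)$ produces the Newton-type recursion $T_k(A)=S_k(A)\,I - A\cdot T_{k-1}(A)$. Applying this recursion at level $k-1$ and substituting into the identification above yields
\[
S_k^{ij}(A)=(T_{k-1}(A))_{ji}=S_{k-1}(A)\delta_{ij}-\sum_{l=1}^n a_{jl}(T_{k-2}(A))_{li}=S_{k-1}(A)\delta_{ij}-\sum_{l=1}^n S_{k-1}^{il}(A)\,a_{jl},
\]
which is the desired identity.

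The only delicate point, and the thing I expect to be the main obstacle to clean bookkeeping, is the consistent tracking of the transposition: because Jacobi's formula produces $(T_{k-1})_{ji}$ rather than $(T_{k-1})_{ij}$, the final contraction carries $a_{jl}$ and not $a_{il}$. For symmetric $A$ both the matrix $A$ and the Newton tensor $T_{k-1}(A)$ are symmetric, so these distinctions are invisible and the formula collapses to Proposition 2.1 of the excerpt; the genuine content of the present proposition is that the identity survives verbatim for general (non-symmetric) $A$ provided the index conventions are followed.
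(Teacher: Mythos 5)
Your proof is correct, and it is worth noting that the paper itself offers no argument for this statement: it simply cites the result of Pietra, Gavitone and Xia (just as the symmetric case, Proposition 2.1, is cited from Reilly), so your derivation is a genuinely self-contained alternative rather than a reproduction of the paper's proof. Your route --- differentiate the generating identity $\det(I+tA)=\sum_{k\ge 0}t^kS_k(A)$ via Jacobi's formula to get $\sum_{k\ge1}t^kS_k^{ij}(A)=t\,\mathrm{adj}(I+tA)_{ji}$, expand $\mathrm{adj}(I+tA)=\sum_{k\ge0}t^kT_k(A)$ with the Newton tensors $T_k(A)=\sum_{m=0}^k(-1)^mS_{k-m}(A)A^m$, identify $S_k^{ij}(A)=(T_{k-1}(A))_{ji}$, and then use the recursion $T_{k-1}(A)=S_{k-1}(A)I-AT_{k-2}(A)$ --- is the classical Newton-tensor argument and nowhere uses symmetry of $A$; the index bookkeeping is right (e.g.\ for $n=k=2$ it yields $S_2^{12}=-a_{21}$, as a direct computation of $\partial\det A/\partial a_{12}$ confirms), and the transposition inherited from Jacobi's formula is precisely what produces $\sum_l S_{k-1}^{il}(A)a_{jl}$ rather than $\sum_l S_{k-1}^{jl}(A)a_{il}$. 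Two small points deserve a line in a careful write-up: the Neumann-series manipulation is legitimate because the entries of $\mathrm{adj}(I+tA)$ are polynomials in $t$ and the series converges for small real $t$ (equivalently, argue on the dense open set where $I+tA$ is invertible and compare polynomial coefficients, noting $T_k(A)=0$ for $k\ge n$ by Cayley--Hamilton); and the recursion step as written requires $k\ge2$, the case $k=1$ being the trivial identity $S_1^{ij}=\delta_{ij}$, consistent with the claim since $S_0\equiv1$ and $S_0^{il}=0$. Beyond proving the statement, your approach has the added benefit of exhibiting $S_k^{ij}(A)$ as the transposed Newton tensor $T_{k-1}(A)$, from which the companion facts the paper uses for these non-symmetric matrices (the divergence-free property of $S_k^{ij}$ and the exchange identity $S_k^{il}(A)a_{jl}=S_k^{lj}(A)a_{li}$) can also be read off directly.
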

The following proposition can be inferred from above proposition, we omit the proof.
\begin{proposition}\label{prop::2.2}
For any $n\times n$ matrix $A=(a_{ij})$, we have
\begin{equation}\label{eq::14}
    S_k^{il}(A)a_{jl}=S_k^{lj}(A)a_{li}.    
\end{equation}
\end{proposition}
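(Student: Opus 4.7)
The plan is to reformulate \eqref{eq::14} in matrix form and prove it by a short induction on $k$ using the Reilly-type identity supplied by the preceding proposition. Let $T_k$ denote the $n\times n$ matrix with entries $(T_k)_{ij} := S_k^{ij}(A)$. Writing out the sums explicitly,
\[
S_k^{il}(A)\,a_{jl} = \sum_l (T_k)_{il}(A^T)_{lj} = (T_k A^T)_{ij}, \qquad S_k^{lj}(A)\,a_{li} = \sum_l (A^T)_{il}(T_k)_{lj} = (A^T T_k)_{ij},
\]
so \eqref{eq::14} is equivalent to the commutator relation $T_k A^T = A^T T_k$, i.e.\ $T_k$ commutes with $A^T$.

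The preceding proposition rewrites as the matrix recursion
\[
T_k = S_{k-1}(A)\, I - T_{k-1} A^T.
\]
Setting $C_k := T_k A^T - A^T T_k$ and substituting the recursion into $C_k$, the $S_{k-1}(A)\,A^T$ contributions cancel and a direct expansion yields
\[
C_k = -(T_{k-1} A^T - A^T T_{k-1})\,A^T = -C_{k-1} A^T.
\]
For the base case $k=1$, the recursion gives $T_1 = S_0(A)I - T_0 A^T = I$, so $C_1 = A^T - A^T = 0$. The recursion $C_k = -C_{k-1}A^T$ then propagates $C_k = 0$ to every $k \geq 1$, which is precisely the desired identity.

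The key conceptual observation is that the Reilly-type recursion is \emph{linear} in the commutator $C_k$; once this structure is exposed, the identity follows automatically. I do not foresee any genuine obstacle here beyond carrying out the cancellation that produces $C_k = -C_{k-1}A^T$, which is a purely formal matrix computation. Hence this is essentially a bookkeeping corollary of the preceding proposition, exactly in the spirit of the authors' remark that ``the following proposition can be inferred from the above''.
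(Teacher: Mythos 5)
Your proof is correct, and it follows exactly the route the authors indicate (the paper omits the proof, saying only that the identity "can be inferred from the above proposition"): you recast \eqref{eq::14} as the commutation $T_kA^T=A^TT_k$, use the recursion $T_k=S_{k-1}(A)I-T_{k-1}A^T$ to get $C_k=-C_{k-1}A^T$ for the commutator, and close the induction with $T_1=I$. This is a complete and clean filling-in of the omitted argument.
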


\subsection{Minkowskian integral formulas}
Let $\Omega$ be a $C^2$ bounded domain, and $\partial\Omega$ is the boundary of $\Omega$. Denote the principle curvatures of $\partial\Omega$ by $\kappa=(\kappa_1,\cdots,\kappa_{n-1})$. For $1\le k\le n-1$, the $k$-th curvature of $\partial\Omega$ is defined as
\begin{equation}H_k:=S_k(\kappa).\end{equation}
$\Omega$ is called \textit{$k$-convex}, if $H_i>0$ for all $1\le i\le k$. In particular, $(n-1)$-convex is strictly convex, 1-convex is also called mean convex.

In the theory of convex bodies and differential geometry, Minkowskian integral formula (see \cite{Hsiung1954,Hsiung1956,Schneider1993}) is very useful. Suppose $\Omega$ is a bounded $C^2$ domain of $\mathbb R^n$, then the Minkowskian integral formula says
\begin{equation}\label{Minkowskionrn}
    \int_{\partial\Omega}\frac{H_k}{C_{n-1}^{k}}x\cdot\gamma \mathrm d \sigma=\int_{\partial\Omega}\frac{H_{k-1}}{C_{n-1}^{k-1}} \mathrm d\sigma.
\end{equation}
Suppose $\Omega$ is a domain of $\mathbb H^n$. Let $p\in \Omega$, $r$ be the distance from $p$. Let $V(x)=\cosh(r(x))$. Then the Minkowskian integral formula says
 \begin{equation}\label{Minkowskionhn}
     \int_{\partial\Omega}\frac{H_k}{C_{n-1}^{k}}V_\gamma\mathrm d \sigma=\int_{\partial\Omega}\frac{H_{k-1}}{C_{n-1}^{k-1}}V \mathrm d\sigma.
 \end{equation}

\subsection{Curvatures of level sets}
Let $u$ be a smooth function in space form $\mathbb R^n$, for any regular $c\in\mathbb R$ of $u$ (that is, $Du(x)\neq 0$ for any $x\in\mathbb R^n$ such that $u(x)=c$),
 the level set $\Sigma_c:=u^{-1}(c)$ is a smooth hypersurface by the implicit function theorem. The $k$-th order curvature $H_k$ of the level set $\Sigma_c$ is given by
\begin{equation}\label{curvoflvlset}
    H_{k-1}=\frac{S_k^{ij}u_iu_j}{|Du|^{k+1}},
\end{equation}
which can be found in \cite{MZ2014,Reilly1974}.

At the last of this section, we introduce some notations. For convenience, we use $S_m$ and $S_m^{ij}$ instead of $S_m(D^2u)$ and $S_m^{ij}(D^u)$ in Section \ref{sec3},  instead of  instead of $S_m(D^2u-uI)$ and $S_m^{ij}(D^u-uI)$ in Section \ref{sec4},  instead of $S_m(D(\frac{Du}{w}))$ and $S_m^{ij}(D(\frac{DU}{w}))$ in Section \ref{sec5}.

\section{Overdetermined problem for Hessian quotient equations in Euclidean space}\label{sec3}

In this section, we present a Rellich-Pohozaev type identity for Hessian quotient equations with zero Dirichlet boundary condition, and use a $P$-function to give a proof of Theorem \ref{mainthm}. 

The following lemma is proven in \cite{Salani2008}, which implies the solution to \eqref{mainpb} is $k$-convex. This ensures MacLaurin inequalities \eqref{MacLaurin} can be applied.
\begin{lemma}[\cite{Salani2008}]\label{ukconvex}
Let $\Omega\subset\mathbb R^n$ be a bounded $C^2$ domain and $u\in C^2(\overline\Omega)$ is a solution to \eqref{mainpb} with $S_l(D^2 u)>0$ in $\overline\Omega$, then $u$ is $k$-convex in $\Omega$.
\end{lemma}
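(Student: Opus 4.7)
The plan is to argue by connectedness, showing that
\[
E := \{x \in \Omega : D^2 u(x) \in \Gamma_k\}
\]
is nonempty, open, and relatively closed in $\Omega$; since $\Omega$ is connected, this forces $E = \Omega$, which is exactly the $k$-convexity of $u$.

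For nonemptiness, the boundary data $u = 0$ and $\partial u/\partial\gamma = 1$ force $u < 0$ in a one-sided neighborhood of $\partial\Omega$ inside $\Omega$, so $u$ attains a strictly negative minimum at some interior point $x_0 \in \Omega$, where $Du(x_0) = 0$ and $D^2 u(x_0) \geq 0$. The equation in \eqref{mainpb} together with $S_l(D^2u) > 0$ in $\overline\Omega$ gives
\[
S_k(D^2u(x_0)) = \frac{C_n^k}{C_n^l}\, S_l(D^2u(x_0)) > 0,
\]
so the positive semidefinite matrix $D^2 u(x_0)$ must have at least $k$ strictly positive eigenvalues; consequently $S_j(D^2u(x_0)) > 0$ for every $1 \leq j \leq k$, proving $x_0 \in E$. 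Openness of $E$ is immediate from continuity of $D^2u$ and openness of $\Gamma_k$.

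For relative closedness, suppose $x_m \in E$ with $x_m \to x_\infty \in \Omega$. Then $D^2u(x_\infty) \in \overline{\Gamma_k}$, so $S_j(D^2u(x_\infty)) \geq 0$ for $1 \leq j \leq k$. Suppose, for contradiction, that $S_{j_0}(D^2u(x_\infty)) = 0$ for some $1 \leq j_0 \leq k$. Applying \eqref{MacLaurin} at each $x_m \in \Gamma_k$ and passing to the limit yields
\[
\left(\frac{S_i(D^2u(x_\infty))}{C_n^i}\right)^{1/i} \leq \left(\frac{S_{j_0}(D^2u(x_\infty))}{C_n^{j_0}}\right)^{1/j_0} = 0
\]
for all $j_0 \leq i \leq k$. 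Combined with $S_i(D^2u(x_\infty)) \geq 0$, this forces $S_i(D^2u(x_\infty)) = 0$ throughout that range; in particular $S_k(D^2u(x_\infty)) = 0$, contradicting $S_k(D^2u(x_\infty)) = (C_n^k/C_n^l)\,S_l(D^2u(x_\infty)) > 0$. Hence $x_\infty \in E$.

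The only subtlety, which I regard as the main obstacle, is extending the MacLaurin chain from the open cone $\Gamma_k$ to its boundary; this is resolved by the limiting argument above, using that the inequalities \eqref{MacLaurin} are preserved under continuous limits. Everything else is a standard open-closed-connected argument, and together with connectedness of $\Omega$ yields $E = \Omega$ as desired.
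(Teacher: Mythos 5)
Your proof is correct, but it takes a genuinely different route from the one the paper (following \cite{Salani2008}) has in mind. There one works with the level sets of $u$: in a frame adapted to a level set, $S_k(D^2u)$ expands as (powers of $|Du|$ times) $u_{nn}H_{k-1}+H_k$, where the $H_j$ are curvatures of the level set, and $k$-convexity is propagated outward from the minimum point through the level sets; the remark following the lemma records exactly where the quotient structure enters, namely that $S_l>0$ is needed to keep $u_{nn}\geq -H_k/H_{k-1}$. Your argument is instead a purely pointwise open--closed--connected argument on $E=\{x\in\Omega: D^2u(x)\in\Gamma_k\}$: nonemptiness at the interior minimum (a positive semidefinite Hessian with $S_k>0$ has at least $k$ positive eigenvalues, hence lies in $\Gamma_k$), openness for free, and relative closedness because the MacLaurin inequalities \eqref{MacLaurin} pass to the limit and show that a matrix in $\overline{\Gamma_k}$ with $S_k>0$ must lie in $\Gamma_k$, while the equation together with $S_l>0$ forces $S_k(D^2u)>0$ throughout $\overline\Omega$. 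This is more elementary and self-contained (no level-set geometry, only the algebra of the Garding cone), and it isolates the single role of the hypothesis $S_l>0$. What it does not deliver, and what the level-set argument of \cite{Salani2008} additionally provides, is the $(k-1)$-convexity of $\Omega$ and of the level sets of $u$; since the lemma as stated claims only the $k$-convexity of $u$, your proof suffices here.
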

\begin{remark}
The proof is amost the same as that in \cite{Salani2008}, except that a key inequality turns into
\begin{align*}
0<\frac1{S_l}(u_{nn}H_{k-1}+H_k).
\end{align*}
So we need $S_l>0$ to ensure that $u_{nn}\geq -\frac{H_k}{H_{k-1}}$.
\end{remark}

Based on Lemma \ref{ukconvex}, we are able to 
derive the lemma below, which was proved in \cite{Safoui}. So we can apply the maximum principle on the P function. For completeness, we present the proof. 

\begin{lemma}[\cite{Safoui}]\label{P}
Let $u\in C^3(\Omega)$ be an admissible (i.e. $u$ is $k$-convex ) solution of 
\begin{align}\label{kl::8}
    {S_k(D^2 u)}=\frac{C_n^k}{C_n^l}{S_l(D^2 u)}\quad   \text{in }\  \Omega\subset\mathbb R^n,
\end{align}
with $0\leq l<k\leq n$. Then the maximum of the following $P$-function 
\begin{equation}\label{eq:p}P:=|Du|^2-2u\end{equation} are obtained only on the boundary $\partial\Omega$ unless $P$ is constant.
\end{lemma}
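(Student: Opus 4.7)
The plan is to show that $P$ is a subsolution of an elliptic operator naturally associated to the quotient equation \eqref{kl::8}, and then apply the strong maximum principle. The natural choice is the linearized operator of the concave elliptic operator $F=(S_k/S_l)^{1/(k-l)}$, which up to a positive factor equals
\[
L^{ij}:=\frac{S_k^{ij}}{S_k}-\frac{S_l^{ij}}{S_l}.
\]
On the admissible set ($u$ is $k$-convex by Lemma~\ref{ukconvex} and $S_l>0$ by assumption), $L^{ij}$ is positive definite by standard facts about Hessian quotient operators.

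A direct computation gives $P_{ij}=2u_{ki}u_{kj}+2u_k u_{kij}-2u_{ij}$. When contracted with $L^{ij}$, the last term contributes $-2(k-l)$ by \eqref{kl::1}. The mixed term is $2u_k(\partial_kS_k/S_k-\partial_kS_l/S_l)$, which vanishes because differentiating \eqref{kl::8} gives $\partial_k\log S_k=\partial_k\log S_l$. For the quadratic term, diagonalizing $D^2u$ and using the classical identity $\sum_i\lambda_i^2 S_{m-1}(\lambda|i)=S_1 S_m-(m+1)S_{m+1}$, we obtain $L^{ij}u_{ki}u_{kj}=\frac{(l+1)S_{l+1}}{S_l}-\frac{(k+1)S_{k+1}}{S_k}$. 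Altogether,
\begin{equation*}
\tfrac12 L^{ij}P_{ij}=\frac{(l+1)S_{l+1}}{S_l}-\frac{(k+1)S_{k+1}}{S_k}-(k-l).
\end{equation*}

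The main obstacle is thus the scalar inequality $\frac{(l+1)S_{l+1}}{S_l}-\frac{(k+1)S_{k+1}}{S_k}\ge k-l$, which must follow from Newton's inequality \eqref{Newton} and the constraint \eqref{kl::8}. The plan is to prove it by telescoping. Set
\[
r_m:=\frac{(m+1)S_{m+1}}{S_m},\qquad R_m:=\frac{r_m}{n-m}=\frac{S_{m+1}/C_n^{m+1}}{S_m/C_n^m}.
\]
Applying \eqref{Newton} with $k\mapsto m+1$ yields $r_{m+1}\le\frac{n-m-1}{n-m}r_m$, hence $r_m-r_{m+1}\ge R_m$. The equation \eqref{kl::8} rewritten as $S_k/C_n^k=S_l/C_n^l$ forces the telescoping product $\prod_{m=l}^{k-1}R_m=(S_k/C_n^k)/(S_l/C_n^l)=1$, and the AM-GM inequality then gives $\sum_{m=l}^{k-1}R_m\ge k-l$. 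Summing the telescoped lower bounds,
\begin{equation*}
r_l-r_k=\sum_{m=l}^{k-1}(r_m-r_{m+1})\ge\sum_{m=l}^{k-1}R_m\ge k-l,
\end{equation*}
which is the desired inequality.

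Consequently $L^{ij}P_{ij}\ge0$ throughout $\Omega$. Since $L^{ij}$ is elliptic with continuous coefficients (using $u\in C^3(\Omega)$), the strong maximum principle implies that $P$ cannot attain an interior maximum unless $P$ is constant, completing the proof.
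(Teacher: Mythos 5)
Your proposal is correct and follows the same strategy as the paper: the same $P$-function, the same linearized operator (your $L^{ij}$ is exactly the paper's $F^{ij}=\frac{1}{S_l^2}(S_k^{ij}S_l-S_kS_l^{ij})$ multiplied by the positive factor $S_l/S_k$), and the same reduction of $L^{ij}P_{ij}\ge 0$ to the scalar inequality $\frac{(l+1)S_{l+1}}{S_l}-\frac{(k+1)S_{k+1}}{S_k}\ge k-l$. The one genuine difference is how that inequality is obtained. The paper splits it into the two separate bounds $(l+1)\frac{S_{l+1}}{S_l}\ge n-l$ and $(k+1)\frac{S_{k+1}}{S_k}\le n-k$, each quoted from the generalized Newton--MacLaurin inequalities \eqref{gNMieq} and \eqref{gNMieq1} together with the constraint \eqref{kl::8}. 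You instead rederive the combined inequality from scratch: Newton's inequality \eqref{Newton} gives $r_m-r_{m+1}\ge R_m$, the equation forces $\prod_{m=l}^{k-1}R_m=1$, and AM--GM closes the telescoping sum. This is self-contained (it only needs \eqref{Newton}, not the full MacLaurin chain) at the cost of being slightly longer; both are valid, and your version makes transparent exactly which case of equality is being used later in the rigidity step. One small point worth making explicit in your write-up: the positivity of $R_m$ (equivalently $S_m>0$ for $l\le m\le k$), needed for the AM--GM step and the divisions, comes from the $k$-convexity supplied by Lemma~\ref{ukconvex}; likewise the positive definiteness of $L^{ij}$ on $\Gamma_k$, which you cite as standard, is implicitly used but never verified in the paper either.
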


\begin{proof}
Denote by 
\begin{align}
    F^{ij}:= \frac{\partial}{\partial u_{ij}}\frac{S_k(D^2 u)}{S_l(D^2 u)}=\frac1{S_l^2}(S_k^{ij}S_l(D^2 u)-S_k(D^2 u)S_l^{ij}).
\end{align}
By \eqref{kl::1}, we have
\begin{align}
 F^{ij}u_{ij}=& (k-l)\frac{S_k(D^2 u)}{S_l(D^2 u)},\\
  F^{ij}u_{si}u_{sj}=& \frac{1}{S_l^2}\big((l+1)S_{l+1}S_k-(k+1)S_{k+1}S_l\big).
\end{align}
Differenting the equation \eqref{kl::8}, we have 
\begin{align}
    F^{ij}u_{ijs}=0.\notag
\end{align}
Then  we have
\begin{align}\label{ineq1}
F^{ij}P_{ij}=&2F^{ij}\big(u_{si}u_{sj}+u_su_{sij}-u_{ij})\notag\\
=&2F^{ij} u_{si}u_{sj}-2F^{ij}u_{ij}\notag\\
=&\frac{2S_k(D^2 u)}{S_l(D^2 u)}\bigl((l+1)\frac{S_{l+1}(D^2 u)}{S_l(D^2 u)}-(k+1)\frac{S_{k+1}(D^2 u)}{S_k(D^2 u)}-(k-l)\bigr).
\end{align}
By \eqref{gNMieq} and \eqref{kl::8}, we have
\begin{align}
    \frac{S_{l+1}(D^2 u)/C_n^{l+1}}{S_l(D^2 u)/C_n^l}\geq \bigg(\frac{S_{k}(D^2 u)/C_n^k}{S_l(D^2 u)/C_n^l}\bigg)^\frac1{k-l}=1.
\end{align}
That is
\begin{align}\label{kl::5}
    (l+1)\frac{S_{l+1}(D^2 u)}{S_l(D^2 u)}\geq n-l.
\end{align}
Plugging  \eqref{kl::5}  into \eqref{ineq1}, we obtain
\begin{align*}\label{ineq2}
    F^{ij}P_{ij}\ge \frac{2S_k(D^2 u)}{S_l(D^2 u)}\bigl(n-k-(k+1)\frac{S_{k+1}(D^2 u)}{S_k(D^2 u)}\bigr)
\end{align*}
By \eqref{gNMieq1}, we have 
\begin{equation}\label{kl::6}
(k+1)\frac{S_{k+1}}{S_k}\leq n-k.
\end{equation}
Plugging  \eqref{kl::5} and \eqref{kl::6} into \eqref{ineq1}, we obtain
\begin{align}\label{kl::7}
    F^{ij}P_{ij}\geq 0.
\end{align}
By maximum principle, the maximum of $P$ is obtain on $\partial\Omega$. 
\end{proof}

The Rellich-Pohozaev type identity for $k$-Hessian equation has already been found \cite{Salani2008,Tso1990}. 
Brandolini, Nitsch, Salani and Trombetti \cite{Salani2008} gave the Rellich-Pohozaev type identity for $S_k(D^2u)=f(u)$ in $\Omega$, with $u=0$ on $\partial\Omega$,
\begin{equation}\frac{n-2k}{k(k+1)}\int_\Omega S_k^{ij}u_iu_j\mathrm dx+\frac1{k+1}\int_{\partial\Omega}x\cdot\gamma|Du|^{k+1}H_{k-1}\mathrm d\sigma=n\int_\Omega F(u)\mathrm dx,\end{equation} where $F(u)=\int_u^0f(s)\mathrm ds$.

For Hessian quotient equations, we first prove  identities for $\int_{\Omega}S_k(D^2 u)u$ and  $\int_{\Omega}S_l(D^2 u)u$ in which
the bad terms  $\int_{\Omega}\partial_s(S_k(D^2 u))x_s u$ and $\int_{\Omega}\partial_s(S_l(D^2 u))x_s u$ arise. By differenting the Hessian quotient equation, these two terms can be cancelled. Then we can prove the following  Rellich-Pohozaev type identity.

\begin{lemma}\label{dlempohorn}
Let $\Omega\subset \mathbb R^n $ be a bounded $C^2$ domain, $0\leq l<k\leq n$. If $u\in C^3(\Omega)\cap C^2(\overline\Omega)$ is a solution to the problem
\begin{equation}\label{f3.6}\begin{cases}
    {S_k(D^2 u)}=\frac{C_n^k}{C_n^l}{S_l(D^2 u)} &\quad\text{in }\Omega,\\
    u=0 &\quad\text{on }\partial\Omega.
\end{cases}\end{equation}
Suppose  $S_l(D^2 u)>0$ in $\overline\Omega$, then 
\begin{align}\label{PohoEuc}
    &(n-k+1)C_n^l\int_\Omega S_{k-1}(D^2 u) |Du|^2\mathrm dx-(n-l+1)C_n^k\int_\Omega S_{l-1}(D^2 u) |Du|^2\mathrm dx\nonumber\\
    &-C_n^l\int_{\partial\Omega}S_k^{ij} |Du|^2x_i\gamma_j\mathrm d\sigma+C_n^k\int_{\partial\Omega}S_l^{ij} |Du|^2x_i\gamma_j\mathrm d\sigma-2(k-l)C_n^l\int_\Omega S_k(D^2 u) u=0.
\end{align}
\end{lemma}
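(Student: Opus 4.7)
The plan is to evaluate the auxiliary integral
\begin{equation*}
J_k := \int_\Omega S_k^{ij}(D^2u)\, x_i\, (|Du|^2)_j\,dx
\end{equation*}
in two different ways, then combine the $k$- and $l$-versions using the equation $C_n^l S_k = C_n^k S_l$ and its $x_s$-derivative to cancel the resulting ``bad'' terms.

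For the first evaluation, integration by parts in $x_j$ using the divergence-free identity $\partial_j S_k^{ij}=0$ from \eqref{f2.10} and the trace relation $\sum_i S_k^{ii}=(n-k+1)S_{k-1}$ from \eqref{kl::1} produces
\begin{equation*}
J_k = \int_{\partial\Omega} S_k^{ij} |Du|^2 x_i \gamma_j\,d\sigma - (n-k+1)\int_\Omega S_{k-1} |Du|^2\,dx.
\end{equation*}
For the second evaluation, I would first expand $(|Du|^2)_j = 2u_su_{sj}$, then use the swap identity \eqref{f2.11} to write $S_k^{ij}u_{sj}=S_k^{sj}u_{ij}$, and finally apply Proposition \ref{keyprop} at level $k+1$ in the algebraic form $S_k^{sj}u_{ij}=S_k\delta_{is}-S_{k+1}^{is}$. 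The integrand rewrites as $S_k^{ij}x_i(|Du|^2)_j = 2S_k(x\cdot Du) - 2S_{k+1}^{is}x_iu_s$. Integration by parts in $x_s$ on the two pieces, together with the Dirichlet condition $u=0$ on $\partial\Omega$, the divergence-free property of $S_{k+1}^{ij}$, and the trace identity $\sum_iS_{k+1}^{ii}=(n-k)S_k$, produces
\begin{equation*}
J_k = -2k\int_\Omega S_ku\,dx - 2\int_\Omega x_s(S_k)_su\,dx.
\end{equation*}

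The identical arguments yield the two corresponding expressions for $J_l$. Forming the combination $-C_n^l J_k + C_n^k J_l$ using both second expressions, the bad integrals $\int x_s(S_k)_su$ and $\int x_s(S_l)_su$ cancel exactly via $C_n^l(S_k)_s = C_n^k(S_l)_s$, obtained by differentiating the equation, while $\int S_lu$ collapses to $\int S_ku$ via the equation itself; this gives $-C_n^l J_k + C_n^k J_l = 2(k-l)C_n^l\int_\Omega S_ku\,dx$. Substituting the first expressions for $J_k$ and $J_l$ and rearranging then yields \eqref{PohoEuc} at once. The main obstacle is the second evaluation of $J_k$: one must recognize that Proposition \ref{keyprop} applied at level $k+1$ yields precisely the identity $S_k^{sj}u_{ij} = S_k\delta_{is} - S_{k+1}^{is}$, which makes $J_k$ reduce to $\int S_ku$ plus a single ``bad'' divergence term of the same structural form in $k$ and $l$; any other representation would leave terms that do not cancel when combined with the differentiated equation.
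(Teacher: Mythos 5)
Your proof is correct: both evaluations of $J_k$ check out (in the second one, $J_k=-2n\int_\Omega S_ku-2\int_\Omega x_s(S_k)_su+2(n-k)\int_\Omega S_ku=-2k\int_\Omega S_ku-2\int_\Omega x_s(S_k)_su$, using that $\partial_s S_{k+1}^{is}=0$ and $S_{k+1}^{ii}=(n-k)S_k$, with the conventions $S_{n+1}=0$ and $S_{-1}=0$ covering the edge cases $k=n$ and $l=0$), and the combination $-C_n^lJ_k+C_n^kJ_l$ with the differentiated equation $C_n^l\partial_sS_k=C_n^k\partial_sS_l$ gives exactly \eqref{PohoEuc}. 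The overall strategy is the same Rellich--Pohozaev scheme as the paper's: equate a ``boundary-plus-$S_{k-1}|Du|^2$'' expression with a ``$\int S_ku$-plus-bad-term'' expression and cancel the bad terms $\int x_s\partial_sS_k\,u$ between the $k$- and $l$-levels. The difference lies in how the second expression is reached. You pass through Proposition \ref{keyprop} at level $k+1$, i.e.\ $S_k^{sj}u_{ij}=S_k\delta_{is}-S_{k+1}^{is}$, and then exploit the divergence-free structure of $S_{k+1}^{ij}$; the paper never invokes $S_{k+1}$ here, but instead starts from $kS_ku=S_k^{ij}u_{is}(x_s)_ju$ and integrates by parts, so that the Pohozaev term arises directly from $S_k^{ij}u_{isj}=\partial_sS_k$ and the remaining term is folded back into $\tfrac12S_k^{ij}(|Du|^2)_ix_j$ via the swap identity \eqref{f2.11}. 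Your closing claim that recognizing the level-$(k+1)$ identity is essential and that ``any other representation would leave terms that do not cancel'' is therefore overstated --- the paper's more direct bookkeeping achieves the same cancellation without it --- but this does not affect the validity of your argument; it only means your route is one of (at least) two workable organizations of the same computation.
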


\begin{proof}
By direct computation, 
\begin{align}\label{f3.8}
    kS_ku=&S_k^{ij}u_{ij}u\nonumber\\=& S_k^{ij}u_{is}u(x_s)_{j}\nonumber\\
    =&(S_k^{ij}u_{is}ux_s)_j-S_k^{ij}u_{isj}ux_s-S_k^{ij}u_{is}u_j x_s\nonumber\\
    =&(S_k^{ij}u_{is}u x_s)_j-x_s\partial _sS_k(D^2 u)u-S_k^{ij}u_{is}u_jx_s.
\end{align}
By \eqref{f2.11}, we have
\begin{equation}\label{f3.11}
    S_k^{ij}u_{is}u_j x_s=S_k^{is}u_{ij}u_jx_s=\frac12S_k^{ij}(|Du|^2)_ix_j.
\end{equation}
Using \eqref{kl::1} and \eqref{f2.10}, we get
\begin{equation}\label{f3.9}
    S_k^{ij}(|Du|^2)_ix_j=(S_k^{ij}|Du|^2x_j)_i-(n-k+1)S_{k-1}|Du|^2.
\end{equation}
Putting \eqref{f3.8}, \eqref{f3.11} and \eqref{f3.9} together,  we find
\begin{align}\label{kl::2}
    2x_s\partial _sS_ku=&(S_k^{ij}u_{is}u(|x|^2)_s)_j -(S_k^{ij}|Du|^2x_j)_i+(n-k+1)S_{k-1}|Du|^2-2kS_ku .
\end{align}
Similarly, we obtain
\begin{align}\label{kl::3}
2x_s\partial _sS_lu=&(S_l^{ij}u_{is}u(|x|^2)_s)_j -(S_l^{ij}|Du|^2x_j)_i+(n-l+1)2S_{l-1}|Du|^2-2lS_lu .
\end{align}
Differentiating the equation \eqref{f3.6}, we have
\begin{align}\label{kl::4}
    C_n^l\partial_sS_k=C_n^k\partial_sS_l.
\end{align}
By \eqref{f3.6}, \eqref{kl::2}, \eqref{kl::3} and \eqref{kl::4}, we obtain
\begin{align}
    &(n-k+1)C_n^l\int_\Omega S_{k-1} |Du|^2\mathrm dx-(n-l+1)C_n^k\int_\Omega S_{l-1} |Du|^2\mathrm dx\nonumber\\
    &-C_n^l\int_{\partial\Omega}S_k^{ij} |Du|^2x_i\gamma_j\mathrm d\sigma+C_n^k\int_{\partial\Omega}S_l^{ij} |Du|^2x_i\gamma_j\mathrm d\sigma-2(k-l)C_n^l\int_\Omega S_k u=0
\end{align}
\end{proof}
The following lemma help us to deal with the term of boundary integral in \eqref{PohoEuc}.
\begin{lemma}\label{dlemma3.3}
Let $\Omega\subset \mathbb R^n$ be a $C^2$ bounded domain, $u\in C^2(\overline\Omega)$ be a solution to the equation \eqref{mainpb}, then 
\begin{equation}\label{formulaforlhs}
   S_k^{ij} x_j\gamma_i|Du|^2 =S_k^{ij} u_iu_jx\cdot\gamma\quad\text{on }\partial\Omega.
\end{equation}
\end{lemma}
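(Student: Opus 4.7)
The approach is to show that on $\partial\Omega$ the unit outer normal $\gamma$ is an eigenvector both of the Hessian $D^2u$ and of the matrix $S_k^{ij}(D^2u)$; once this is established, the identity \eqref{formulaforlhs} reduces to equating two scalar multiples of $x\cdot\gamma$.

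First, I would use the zero Dirichlet condition: since $u\equiv 0$ on $\partial\Omega$, every tangential derivative of $u$ along $\partial\Omega$ vanishes, so $Du$ is normal to $\partial\Omega$; combined with $\partial u/\partial\gamma = 1$, this gives
\[
Du = \gamma,\qquad |Du|^2 = 1\qquad \text{on }\partial\Omega.
\]
Differentiating $|Du|^2 \equiv 1$ along an arbitrary tangent vector $v$ to $\partial\Omega$,
\[
0 = \tfrac{1}{2}v^{j}\partial_{j}|Du|^2 = u_i u_{ij} v^{j} = \gamma_i u_{ij} v^{j},
\]
so the vector with components $u_{ij}\gamma_i$ has no tangential part. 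Hence on $\partial\Omega$ there is a scalar $c$ with $u_{ij}\gamma_i = c\gamma_j$, i.e., $\gamma$ is an eigenvector of $D^2 u$.

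Next I would propagate the eigenvector property to $S_k^{ij}(D^2u)$ via Proposition \ref{keyprop}. Inductively suppose $S_{k-1}^{il}(D^2u)\gamma_l = \lambda_{k-1}\gamma_i$ on $\partial\Omega$ (the base case $S_1^{ij} = \delta_{ij}$ being trivial). Then by \eqref{prop2.1},
\[
S_k^{ij}\gamma_j = S_{k-1}\gamma_i - S_{k-1}^{il}u_{jl}\gamma_j = S_{k-1}\gamma_i - c\,S_{k-1}^{il}\gamma_l = \bigl(S_{k-1} - c\lambda_{k-1}\bigr)\gamma_i,
\]
so $S_k^{ij}\gamma_j = \mu\gamma_i$ with $\mu := S_{k-1} - c\lambda_{k-1}$. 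Equivalently, one can observe that $S_k^{ij}(A)$ is the $k$-th Newton transformation of $A$, a polynomial in the symmetric matrix $A = D^2u$, and therefore simultaneously diagonalisable with it.

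Finally, using $|Du|^2 = 1$, $u_i = \gamma_i$, $\gamma_i\gamma_i = 1$ and $S_k^{ij}\gamma_j = \mu\gamma_i$ on $\partial\Omega$,
\[
S_k^{ij}x_j\gamma_i\,|Du|^2 = \mu\,\gamma_j x_j = \mu(x\cdot\gamma) = \mu\,\gamma_i\gamma_j\,(x\cdot\gamma) = S_k^{ij}u_i u_j\,(x\cdot\gamma),
\]
which is precisely \eqref{formulaforlhs}. The only mildly subtle step is the inductive transfer of the eigenvector property from $D^2u$ to $S_k^{ij}(D^2u)$; everything else is direct bookkeeping from the overdetermined boundary data.
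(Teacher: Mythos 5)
Your proof is correct and follows essentially the same route as the paper: both arguments extract from the overdetermined boundary data that $\gamma$ is an eigenvector of $D^2u$ on $\partial\Omega$ (the paper writes this as $u_{ij}\gamma_i=u_{\gamma\gamma}\gamma_j$) and then propagate this through the recursion \eqref{prop2.1} by induction on $k$. The only difference is organizational: you induct on the cleaner statement $S_k^{ij}\gamma_j=\mu\gamma_i$ (equivalently, observe that the Newton tensor is a polynomial in $D^2u$ and so shares its eigenvectors), whereas the paper inducts directly on the identity \eqref{formulaforlhs}; the content is identical.
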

\begin{proof}
By the boundary conditions of problem \eqref{mainpb}, we have on $\partial\Omega$ that
\begin{equation}\label{Duonpo}
    Du=u_\gamma\gamma,
\end{equation}
and 
\begin{equation}
    \begin{aligned}
    (u_\gamma)_j=(u_i\gamma_i)_j=u_{ij}\gamma_i+u_iD_j\gamma_i=u_{ij}\gamma_i+u_\gamma \gamma_iD_j\gamma_i=u_{ij}\gamma_i.
    \end{aligned}
\end{equation}
Then 
\begin{equation}
    \begin{aligned}
    (u_\gamma)_\gamma=u_{ij}\gamma_i\gamma_j:=u_{\gamma\gamma}.
    \end{aligned}
\end{equation}
Since $u_\gamma=c_0$ on $\partial\Omega$, for any tangential direction $\tau$, we have $(u_\gamma)_\tau=0$, then
\begin{equation}
    \begin{aligned}
    Du_\gamma=(u_\gamma)_\gamma\gamma=u_{ij}\gamma_i\gamma_j\gamma.
    \end{aligned}
\end{equation}
Therefore, 
\begin{equation}\label{3.10}
    \begin{aligned}
    u_{ij}\gamma_i=(u_\gamma)_j=u_{kl}\gamma_k\gamma_l\gamma_j=(u_{\gamma})_{\gamma}\gamma_j=u_{\gamma\gamma}\gamma_j.
    \end{aligned}
\end{equation}
It follows that 
\begin{equation}\label{3.11}u_{ij}u_i=u_{\gamma\gamma}u_j.\end{equation}
Now we derive \eqref{formulaforlhs} by induction. When $k=1$, \eqref{formulaforlhs} holds obviously since $S_1^{ij} =\delta_{ij}$. Suppose \eqref{formulaforlhs} holds for $k-1$, then 
\begin{equation}\label{e3.15}\begin{aligned}
S_k^{ij} x_i\gamma_j|Du|^2=&(S_{k-1} \delta_{ij}-S_{k-1}^{js} u_{si})x_j\gamma_i|Du|^2\\
=&S_{k-1} |Du|^2x\cdot\gamma-S_{k-1}^{js} u_{\gamma\gamma}\gamma_sx_j|Du|^2\\
=&S_{k-1} |Du|^2x\cdot\gamma-S_{k-1}^{ij} u_iu_jx\cdot\gamma u_{\gamma\gamma}\\
=&(S_{k-1} |Du|^2-S_{k-1}^{ij} u_{il}u_lu_j)x\cdot\gamma\\
=&S_{k}^{ij} u_iu_jx\cdot\gamma,
\end{aligned}\end{equation}
where we use \eqref{prop2.1} in the first and last equality, \eqref{3.10} in the second equality, and \eqref{3.11} in the fourth equality.
\end{proof}

Moreover, by formulas for the curvature of level sets and Minkowskian integral formulas, the integral on the boundary can be converted  into the integral on $\Omega$.
\begin{lemma}\label{dlemma3.4}
Let $u\in C^3(\Omega)\cap C^2(\overline\Omega)$ be a solution to the problem \eqref{mainpb}, then 
\begin{equation}\label{d3.13}
    \int_{\partial\Omega}S_k^{ij} u_iu_jx\cdot\gamma\mathrm d\sigma=(n-k+1)\int_{\Omega}S_{k-1}(D^2 u) \mathrm dx.
\end{equation}
\end{lemma}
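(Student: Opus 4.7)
The plan is to combine Lemma~\ref{dlemma3.3} with a direct divergence-theorem calculation, relying on the two structural facts \eqref{f2.10} (divergence-free property of $S_k^{ij}$) and \eqref{kl::1} (trace identity). The overdetermined boundary condition does essential work: since $u=0$ and $u_\gamma=1$ on $\partial\Omega$, we have $|Du|^2=1$ there, which is exactly what converts the $|Du|^2$-weighted identity in Lemma~\ref{dlemma3.3} into a clean expression amenable to the divergence theorem.

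Concretely, substituting $|Du|^2=1$ into \eqref{formulaforlhs} and using the symmetry $S_k^{ij}=S_k^{ji}$ yields the pointwise identity $S_k^{ij} u_i u_j \, x\cdot\gamma = S_k^{ij} x_i \gamma_j$ on $\partial\Omega$, so the left-hand side of \eqref{d3.13} equals $\int_{\partial\Omega} S_k^{ij} x_i \gamma_j \, d\sigma$. Next, I apply the divergence theorem to the vector field $V$ with components $V_j := S_k^{ij} x_i$, which is $C^1$ on $\Omega$ because $u\in C^3(\Omega)$. By \eqref{f2.10} and \eqref{kl::1},
\[
\partial_j V_j \;=\; (\partial_j S_k^{ij})\, x_i + S_k^{ij}\delta_{ij} \;=\; \sum_{i=1}^{n} S_k^{ii} \;=\; (n-k+1)\, S_{k-1}.
\]
Integration over $\Omega$, combined with the first step, yields \eqref{d3.13}.

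There is essentially no obstacle: every tool is already in place in Section~\ref{sec2} and at the beginning of Section~\ref{sec3}. The only point to watch is that the reduction from Lemma~\ref{dlemma3.3} requires $|Du|$ to equal $1$ exactly, not merely be constant on $\partial\Omega$, which is provided precisely by the normalization $u_\gamma=1$ in \eqref{mainpb}; a different normalization would alter the constant in front of $\int_\Omega S_{k-1}\,dx$. (One could equivalently route the argument through the curvature-of-level-sets formula \eqref{curvoflvlset} and the Minkowski formula \eqref{Minkowskionrn}, since $S_k^{ij}u_iu_j = H_{k-1}(\partial\Omega)$ on $\partial\Omega$, but the divergence-theorem derivation above is the most direct.)
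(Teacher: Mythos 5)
Your proof is correct, but it follows a genuinely different route from the paper's. The paper does not use Lemma~\ref{dlemma3.3} at this point: it rewrites $S_k^{ij}u_iu_j\,x\cdot\gamma$ as $H_{k-1}|Du|^{k+1}\,x\cdot\gamma$ via the level-set curvature formula \eqref{curvoflvlset}, applies the Minkowskian integral formula \eqref{Minkowskionrn} to trade $H_{k-1}\,x\cdot\gamma$ for $H_{k-2}$, converts back to $S_{k-1}^{ij}u_i\gamma_j$ using $Du=\gamma$ on $\partial\Omega$, and finishes with the divergence theorem applied to the field $S_{k-1}^{ij}u_i$ together with $S_{k-1}^{ij}u_{ij}=(k-1)S_{k-1}$. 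You instead substitute $|Du|^2=1$ into \eqref{formulaforlhs}, use the symmetry of $S_k^{ij}$, and apply the divergence theorem once to $S_k^{ij}x_i$, invoking \eqref{f2.10} and the trace identity in \eqref{kl::1}. Your argument is shorter and bypasses the Minkowski formula and the level-set curvature machinery entirely; it also avoids the factor $\tfrac{n-k+1}{k-1}$ that appears in \eqref{d3.15} and is formally problematic when $k=1$ (though that case is trivial either way since $S_1^{ij}=\delta_{ij}$). The only point worth noting is that the divergence theorem is applied to a vector field that is only $C^1(\Omega)\cap C^0(\overline\Omega)$ with divergence continuous up to the boundary; this is exactly the regularity the paper itself relies on in \eqref{d3.17}, so it introduces no additional gap.
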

\begin{proof}
From \eqref{curvoflvlset}, 
\begin{equation}
    S_k^{ij}u_iu_jx\cdot \gamma=H_{k-1}|Du|^{k+1}x\cdot \gamma\quad\text{on }\partial\Omega,
\end{equation}
Integrating it on $\partial\Omega$, and using Minkowskian integral formula \eqref{Minkowskionrn}, we obtain
\begin{equation}\label{d3.15}
    \int_{\partial\Omega}S_k^{ij} u_iu_jx\cdot \gamma\mathrm d\sigma=\int_{\partial\Omega}H_{k-1}|Du|^{k+1}x\cdot \gamma\mathrm d\sigma =\frac{n-k+1}{k-1}\int_{\partial\Omega}H_{k-2}|Du|^{k+1}\mathrm d\sigma.
\end{equation}
By \eqref{curvoflvlset} and \eqref{Duonpo}, we have
\begin{equation}
    H_{k-2}|Du|^{k+1}=S_{k-1}^{ij} u_iu_j|Du|= S_{k-1}^{ij} u_i\gamma_j\quad\text{on }\partial\Omega.
\end{equation}
Applying the divergence theorem, we get
\begin{equation}\label{d3.17}
\begin{aligned}
\int_{\partial\Omega}H_{k-2}|Du|^{k+1}\mathrm  d\sigma=& \int_{\Omega}(S_{k-1}^{ij} u_i)_j\mathrm dx
\end{aligned}
\end{equation}
Since $S_k^{ij}$ is divergence free, it follows that
\begin{equation}\label{d3.18}
    (S_{k-1}^{ij} u_i)_j=S_{k-1}^{ij} u_{ij}=(k-1)S_{k-1} .
\end{equation}
Putting \eqref{d3.15}, \eqref{d3.17} and \eqref{d3.18} together, we deduce \eqref{d3.13}.
\end{proof}

\begin{proof}[Proof of Theorem \ref{mainthm}.]
By Lemma \ref{dlempohorn}, Lemma \ref{dlemma3.3} and Lemma \ref{dlemma3.4}, we obtain
\begin{equation}
\begin{aligned}
2(k-l)C_n^l\int_\Omega uS_k\mathrm dx=&
(n-k+1)C_n^l\int_\Omega S_{k-1}(|Du|^2-1)\mathrm dx\\
&-(n-l+1)C_n^k\int_\Omega S_{l-1}(|Du|^2-1)\mathrm dx,
\end{aligned}
\end{equation}
or equivalently,
\begin{align}\label{test2}
    2(k-l)\int_\Omega (-u)S_k(D^2 u)\mathrm dx=
    (n-k+1)\int_\Omega (1-|Du|^2)S_{k-1}(D^2 u)\Big(1-\frac{l}{k}\frac{S_{l-1}(D^2 u)/C_{n}^{l-1}}{S_{k-1}(D^2 u)/C_n^{k-1}}\Big)\mathrm dx.
\end{align}
By \eqref{mainpb} and \eqref{gNMieq}, we have
\begin{align}
    \bigg(\frac{S_{k-1}/C_n^{k-1}}{S_{l-1}/C_n^{l-1}}\bigg)^\frac1{k-l}\geq \bigg(\frac{S_{k}/C_n^k}{S_{l}/C_n^l}\bigg)^\frac1{k-l}=1\geq \frac{S_k/C_n^k}{S_{k-1}/C_n^{k-1}}.
\end{align}
So 
\begin{align}\label{kl::9}
    \frac{S_{l-1}}{S_{k-1}}\frac{(n-l+1)C_n^k}{(n-k+1)C_n^l}\leq \frac lk\quad\text{and}\quad S_k\leq \frac{n-k+1}kS_{k-1}.
\end{align}
By Lemma \ref{P}, 
\begin{align}
    1-|Du|^2\ge -2u> 0 \ \text{in }\ \Omega.
\end{align}
Then substituting \eqref{kl::9} into \eqref{test2}, we get
\begin{align}
    \int_\Omega S_{k-1}(D^2 u)(|Du|^2-2u-1)\mathrm dx\geq 0.
\end{align}
By Lemma \ref{P},
\begin{align}
    P\leq \max_{\partial\Omega}P=1.
\end{align}
It follows that
\begin{align}
    P=|Du|^2-2u\equiv 1\quad\text{in }\Omega.
\end{align}
Since the derivatives vanish, by \eqref{ineq1} and \eqref{kl::5}, we obtain
\begin{align}
    0\geq n-l-(k+1)\frac{S_{k+1}}{S_k}.
\end{align}
Hence $S_{k+1}>0$ and the equality in \eqref{kl::6} holds. By \eqref{gNMieq}, the eigenvalues of $D^2u$ are equal to $1$. Using the boundary condition in \eqref{mainpb}, we derive that $u=\frac{|x|^2-1}2$. Hence we complete the proof of Theorem \ref{mainthm}
\end{proof}

\section{Overdetermined problem for Hessian quotient equations on the Hyperbolic space}\label{sec4}

In this section, we present a Rellich-Pohozaev type identity for Hessian quotient equations with zero Dirichlet boundary condition, and use a $P$-function to give a proof of Theorem \ref{mainthm2}. 

The following lemma can be proved amost the same as  \cite{GJY2022}, which implies the solution to \eqref{mainpb2} is $k$-admissible. 
\begin{lemma}
Let $\Omega\subset\mathbb H^n$ be a bounded $C^2$ domain and $u\in C^2(\overline\Omega)$ is a solution to the problem \eqref{mainpb}, then $u$ is $k$-admissibe in $\Omega$.
\end{lemma}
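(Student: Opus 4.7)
The plan is to adapt the argument from \cite{GJY2022}, which establishes $k$-admissibility for solutions to the pure $k$-Hessian overdetermined problem on $\mathbb{H}^n$, modifying it to handle the Hessian quotient structure. This parallels the way Lemma \ref{ukconvex} was obtained from the Euclidean argument of \cite{Salani2008}; the crucial new ingredient is the hypothesis $S_l(D^2 u - uI) > 0$ on $\overline{\Omega}$, which allows division by $S_l$ in the key estimate.

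First I would perform boundary analysis at an arbitrary point $x_0 \in \partial\Omega$. Pick a local orthonormal frame $\{e_1,\dots,e_{n-1},e_n\}$ at $x_0$ with $e_n=\gamma$ the outward unit normal and $e_1,\dots,e_{n-1}$ diagonalizing the second fundamental form of $\partial\Omega$. Because $u=0$ on $\partial\Omega$, the correction $-uI$ vanishes at $x_0$, so $(D^2u-uI)(x_0)=D^2u(x_0)$. Together with $u_\gamma=1$, differentiating $u|_{\partial\Omega}\equiv 0$ twice in tangential directions gives $u_{ij}(x_0)=\kappa_i\delta_{ij}$ for $1\le i,j\le n-1$, where $\kappa_i$ are the hyperbolic principal curvatures of $\partial\Omega$. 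Expanding along the last row and column,
\[
S_m(D^2u-uI)(x_0)=u_{nn}(x_0)H_{m-1}(x_0)+H_m(x_0),
\]
with $H_m$ the $m$-th mean curvature of $\partial\Omega$ in $\mathbb{H}^n$.

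Substituting into $S_k=(C_n^k/C_n^l)S_l$ produces a linear equation for $u_{nn}(x_0)$, and after the same rearrangement as in the Euclidean case one arrives at the key inequality
\[
0<\frac{1}{S_l}\bigl(u_{nn}H_{k-1}+H_k\bigr),
\]
which, since $S_l>0$ by hypothesis, forces $u_{nn}(x_0)\ge -H_k(x_0)/H_{k-1}(x_0)$. Combined with \eqref{gNMieq}, this places $(D^2u-uI)(x_0)$ in $\overline{\Gamma}_k$ at every boundary point. To propagate $k$-admissibility into the interior I would run the standard continuity/connectedness argument: if the set $E:=\{x\in\Omega:(D^2u-uI)(x)\notin\Gamma_k\}$ were nonempty, at a point of $\partial E\cap\Omega$ some $S_j$ with $1\le j\le k$ would vanish; but the equation together with $S_l>0$ and the MacLaurin inequalities of Section \ref{sec2} force each $S_j>0$, a contradiction.

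The main obstacle is the correct handling of the Levi-Civita connection terms in the hyperbolic setting: a priori the tangential Hessian $u_{ij}$ at $x_0$ carries curvature corrections that are absent in the Euclidean computation. Since $u$ and $|Du|$ are constant on $\partial\Omega$, however, these corrections cancel (as already verified in \cite{GJY2022}), so the boundary identity $S_m(D^2u-uI)|_{\partial\Omega}=u_{nn}H_{m-1}+H_m$ holds in exactly the Euclidean form, and the remainder of the argument transfers with only cosmetic changes from \cite{GJY2022,Salani2008}.
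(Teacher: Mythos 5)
Your proposal follows the route the paper itself intends: the paper offers no proof of this lemma beyond the assertion that it "can be proved almost the same as" \cite{GJY2022}, and your sketch is precisely that adaptation --- the boundary identity $S_m(D^2u-uI)=u_{nn}H_{m-1}+H_m$ on $\partial\Omega$ (exact here because $u_\gamma$ is constant, so $\gamma$ is an eigenvector of $D^2u$ and the mixed terms $u_{in}$ vanish), the key inequality $0<\tfrac{1}{S_l}\bigl(u_{nn}H_{k-1}+H_k\bigr)$ enabled by the hypothesis $S_l(D^2u-uI)>0$ (which you rightly restore; the lemma as printed omits it and also miscites \eqref{mainpb} for \eqref{mainpb2}), and a connectedness argument, exactly mirroring the paper's Remark~3.2 for the Euclidean case. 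One caution: at the two places where you invoke the MacLaurin inequalities \eqref{gNMieq} at points not yet known to lie in $\Gamma_k$ (to place $(D^2u-uI)(x_0)$ in $\overline{\Gamma}_k$ and to reach the contradiction on $\partial E\cap\Omega$), the argument is circular as written and should instead rest on Newton's inequalities \eqref{Newton}, which hold for arbitrary symmetric matrices and show that on $\overline{\Gamma}_k$ the vanishing of any $S_j$ with $j\le k$ forces $S_k=0$, contradicting $S_k=\tfrac{C_n^k}{C_n^l}S_l>0$; likewise the seed of the connectedness argument (nonemptiness of the good set) deserves an explicit word.
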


The following two  lemmas are from  \cite{GJY2022}.
\begin{lemma}\label{lemma::kl::7}
Let $u\in C^2(\overline\Omega)$  satisfying $u=0$ and $u_\gamma=1$ on $\partial\Omega$, then 
\begin{equation}
   S_k^{ij} V_j\gamma_i|Du|^2 =S_k^{ij} u_iu_jV_\gamma\quad\text{on }\partial\Omega.
\end{equation}
\end{lemma}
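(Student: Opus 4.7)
The plan is to mirror the proof of Lemma~\ref{dlemma3.3} almost verbatim, with the position vector $x$ of the Euclidean setting replaced by the gradient of $V=\cosh r$ in the hyperbolic setting; in particular $x\cdot\gamma$ becomes $V_\gamma$. The key simplification is that on $\partial\Omega$ we have $u=0$, so the matrix $D^2u-uI$ reduces to $D^2u$ there, and Proposition~\ref{keyprop} therefore yields the same recursive decomposition $S_k^{ij}=S_{k-1}\delta_{ij}-S_{k-1}^{il}u_{jl}$ on the boundary that drove the Euclidean induction.

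The first task is to re-establish the boundary Hessian identity $u_{ij}\gamma^i=u_{\gamma\gamma}\gamma_j$ on $\partial\Omega$ in the hyperbolic setting. Differentiating $u_\gamma=u_i\gamma^i$ covariantly gives $\nabla_j u_\gamma=u_{ij}\gamma^i+u_i\nabla_j\gamma^i$. Since $u=0$ on $\partial\Omega$ forces $Du=u_\gamma\gamma=\gamma$ (so $u^i$ is parallel to $\gamma^i$), and since $|\gamma|^2=1$ implies $\gamma_i\nabla_j\gamma^i=0$, the second term vanishes. Because $u_\gamma\equiv 1$ is constant on $\partial\Omega$, the covector $\nabla_j u_\gamma$ has no tangential component, so $u_{ij}\gamma^i=u_{\gamma\gamma}\gamma_j$ and hence $u_{ij}u^i=u_{\gamma\gamma}u_j$.

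Armed with this identity, I would prove by induction on $k$ that $S_k^{ij}V_j\gamma_i|Du|^2=S_k^{ij}u_iu_jV_\gamma$ on $\partial\Omega$. The base case $k=1$ is immediate from $S_1^{ij}=\delta_{ij}$ together with $u_i=\gamma_i$ and $|Du|^2=1$ on $\partial\Omega$. For the inductive step, substitute $S_k^{ij}=S_{k-1}\delta_{ij}-S_{k-1}^{il}u_{jl}$, apply $u_{jl}\gamma_j=u_{\gamma\gamma}\gamma_l$, and invoke the inductive hypothesis on $S_{k-1}$ (with indices suitably relabeled); the Euclidean computation of \eqref{e3.15} then transfers line-by-line with $x_j\mapsto V_j$ and $x\cdot\gamma\mapsto V_\gamma$.

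The only real obstacle is the first paragraph of work: tracking the covariant derivative of the unit normal field along $\partial\Omega\subset\mathbb H^n$ and verifying the cancellation $u_i\nabla_j\gamma^i=0$. Once the boundary Hessian identity is secured, the rest of the argument is a formal transcription of the Euclidean proof.
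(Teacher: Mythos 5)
Your proposal is correct: the paper itself does not prove Lemma \ref{lemma::kl::7} (it cites \cite{GJY2022}), but your argument is precisely the hyperbolic transcription of the paper's own proof of the Euclidean analogue, Lemma \ref{dlemma3.3} — you correctly identify the three essential points, namely that $u=0$ on $\partial\Omega$ makes $D^2u-uI$ reduce to $D^2u$ there so Proposition \ref{keyprop} gives the same recursion, that $u_{ij}\gamma^i=u_{\gamma\gamma}\gamma_j$ follows from $Du=u_\gamma\gamma$ and the constancy of $u_\gamma$ exactly as in \eqref{3.10}, and that the induction of \eqref{e3.15} then runs verbatim with $x_j\mapsto V_j$. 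No gaps.
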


\begin{lemma}\label{lemma::kl::8}
Let $u\in C^2(\overline\Omega)$  satisfying $u=0$ and $u_\gamma=1$ on $\partial\Omega$, then 
\begin{equation}\label{d4.13}
    \int_{\partial\Omega}S_{k}^{ij} u_iu_jV_\gamma\mathrm d\sigma=(n-k+1)\int_\Omega S_{k-1} V\mathrm dx.
\end{equation}
\end{lemma}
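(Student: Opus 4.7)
The plan is to adapt the Euclidean proof of Lemma \ref{dlemma3.4} to $\mathbb H^n$, using the hyperbolic Minkowskian formula \eqref{Minkowskionhn}, the curvature-of-level-sets formula \eqref{curvoflvlset}, the divergence-free property of $S_{k-1}^{ij}(D^2u-uI)$ given by Proposition \ref{propdivfree}, and the identity $V_{ij}=Vg_{ij}$ satisfied by $V=\cosh r$ in $\mathbb H^n$. The Euclidean version only needed one integration by parts after invoking Minkowski, whereas the hyperbolic version will need a second one to absorb the lower-order terms produced by the covariant Hessian.

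First I would use the boundary data. On $\partial\Omega$ we have $u=0$ and $u_\gamma=1$, so $Du=\gamma$, $|Du|=1$, and in particular $D^2u-uI=D^2u$ along $\partial\Omega$. The level-set curvature formula applied with indices $k$ and $k-1$ then gives $S_k^{ij}u_iu_j=H_{k-1}$ and $S_{k-1}^{ij}u_iu_j=H_{k-2}$ on $\partial\Omega$, so that \eqref{Minkowskionhn} with index $k-1$ yields
\begin{equation*}
\int_{\partial\Omega}S_k^{ij}u_iu_jV_\gamma\,\mathrm d\sigma=\int_{\partial\Omega}H_{k-1}V_\gamma\,\mathrm d\sigma=\tfrac{n-k+1}{k-1}\int_{\partial\Omega}H_{k-2}V\,\mathrm d\sigma=\tfrac{n-k+1}{k-1}\int_{\partial\Omega}S_{k-1}^{ij}u_i\gamma_jV\,\mathrm d\sigma,
\end{equation*}
the last equality using $u_j=\gamma_j$ on $\partial\Omega$. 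Next I apply the divergence theorem on $\mathbb H^n$ to the vector field $V\,S_{k-1}^{ij}u_i$; combined with $D_jS_{k-1}^{ij}=0$ from Proposition \ref{propdivfree}, it produces $\int_\Omega VS_{k-1}^{ij}u_{ij}\,\mathrm dx+\int_\Omega S_{k-1}^{ij}u_iV_j\,\mathrm dx$, and splitting $u_{ij}=(D^2u-uI)_{ij}+u\,\delta_{ij}$ via \eqref{kl::1} rewrites the first term as $(k-1)\int_\Omega S_{k-1}V\,\mathrm dx+(n-k+2)\int_\Omega uS_{k-2}V\,\mathrm dx$.

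The remaining task, and also the main obstacle, is to show that $\int_\Omega S_{k-1}^{ij}u_iV_j\,\mathrm dx$ exactly cancels the stray $(n-k+2)\int_\Omega uS_{k-2}V\,\mathrm dx$ picked up above. For this I integrate by parts a second time, moving the derivative from $u$ onto the other factors. The boundary term vanishes because $u=0$ on $\partial\Omega$, the divergence-free property again kills the derivative of $S_{k-1}^{ij}$, and the identity $V_{ij}=Vg_{ij}$ characteristic of $V=\cosh r$ in $\mathbb H^n$ collapses $S_{k-1}^{ij}V_{ij}$ to $(n-k+2)S_{k-2}V$, yielding $\int_\Omega S_{k-1}^{ij}u_iV_j\,\mathrm dx=-(n-k+2)\int_\Omega uS_{k-2}V\,\mathrm dx$. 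The cancellation then gives $\int_{\partial\Omega}S_{k-1}^{ij}u_i\gamma_jV\,\mathrm d\sigma=(k-1)\int_\Omega S_{k-1}V\,\mathrm dx$, which combined with the Minkowski step delivers the claim. The subtle point is that neither the covariant-Hessian correction nor the non-flat gradient of $V$ is benign on its own, but $D^2V=Vg$ is exactly the link that forces the two contributions to match with opposite signs.
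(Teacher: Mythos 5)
Your argument is correct. The paper itself gives no proof of this lemma (it is quoted from \cite{GJY2022}), but your proof is exactly the natural hyperbolic analogue of the paper's own proof of Lemma \ref{dlemma3.4}: boundary data plus \eqref{curvoflvlset} to identify $S_k^{ij}u_iu_j$ with $H_{k-1}$ (legitimate on $\partial\Omega$ since $u=0$ there makes $D^2u-uI=D^2u$), the Minkowski formula \eqref{Minkowskionhn} at level $k-1$, and the divergence theorem. The one genuinely new ingredient you need --- and correctly supply --- is the second integration by parts, where the boundary term dies because $u=0$, the divergence-free property kills $D_iS_{k-1}^{ij}$, and $D^2V=Vg$ turns $\int_\Omega S_{k-1}^{ij}u_iV_j$ into $-(n-k+2)\int_\Omega uS_{k-2}V$, exactly cancelling the term produced by splitting $u_{ij}=(u_{ij}-u\delta_{ij})+u\delta_{ij}$. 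The arithmetic checks out: the interior terms sum to $(k-1)\int_\Omega S_{k-1}V$, and the Minkowski factor $\tfrac{n-k+1}{k-1}$ gives \eqref{d4.13}. (As with the paper's Euclidean version, the factor $\tfrac{n-k+1}{k-1}$ presumes $k\ge 2$; for $k=1$ the identity is just the divergence theorem applied to $DV$ together with $\Delta V=nV$.)
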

$P=|Du|^2-u^2-2u$ was proven to be a P function for the $k$-Hessian equation on $\mathbb H^n$ in \cite{GJY2022}. In the following lemma, we prove it is also a P function for the Hessian quotient equation on $\mathbb H^n$. By the appointment in Section 2, we use $S_k$ and $S_k^{ij}$ instead of $S_k(D^2u-uI)$ and $S_k^{ij}(D^2u-uI)$ in this section.
\begin{lemma}\label{lemma::kl::9}
Let $u\in C^3(\Omega)$ be a solution to
\begin{align}\label{klh::1}
    S_k  =\frac{C_n^k}{C_n^l}S_l&\quad\text{in }\Omega,
\end{align}
with $0\leq l <k\leq n$. Then the maximum of  $|Du|^2-u^2-2u$ is obtained on $\partial\Omega$.
\end{lemma}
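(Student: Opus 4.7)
The strategy is to mirror the Euclidean argument of Lemma \ref{P}, with the caveat that the extra $-u^2$ summand in $P$ is precisely what is needed to absorb the Ricci curvature corrections coming from commuting covariant derivatives on $\mathbb{H}^n$. Set $W_{ij} := u_{ij} - u\,\delta_{ij}$ in a local orthonormal frame, and define
\begin{equation*}
F^{ij} := \frac{\partial}{\partial W_{ij}}\!\left(\frac{S_k(W)}{S_l(W)}\right) = \frac{1}{S_l^2}\bigl(S_k^{ij} S_l - S_k\, S_l^{ij}\bigr).
\end{equation*}
Since $u$ is $k$-admissible, $(F^{ij})$ is positive definite. Differentiating \eqref{klh::1} in an arbitrary direction gives the analogue of the Euclidean identity
\begin{equation*}
F^{ij}\, D_s W_{ij} = 0.
\end{equation*}

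First I would compute $D_i P = 2 W_{ij} u^j - 2 u_i$ and then $D_k D_i P$, using the Ricci identity on $\mathbb{H}^n$ (sectional curvature $-1$) to commute $D_k D_j u_i = D_j D_k u_i + (u_j g_{ki} - u_k g_{ij})$ when we want to recognize $F^{ik} D_k W_{ij}$ and reduce it via the differentiated equation. The key calculation is that these curvature corrections, together with the $-u^2$ in $P$, organize themselves so that all mixed terms in $u$, $Du$ and $\sum F^{ii}$ cancel, leaving
\begin{equation*}
F^{ik} D_k D_i P = 2 F^{ij} W_{is} W_{sj} - 2 F^{ij} W_{ij},
\end{equation*}
i.e.\ literally the same expression as in the proof of Lemma \ref{P} with $W$ playing the role of $D^2u$. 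Using Proposition \ref{prop::2.2} and the standard identities \eqref{kl::1}, this rewrites as
\begin{equation*}
F^{ij} D_i D_j P = \frac{2 S_k}{S_l}\!\left((l+1)\frac{S_{l+1}}{S_l} - (k+1)\frac{S_{k+1}}{S_k} - (k-l)\right).
\end{equation*}

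To conclude, I would apply the generalized Newton--MacLaurin inequalities \eqref{gNMieq} and \eqref{gNMieq1} with the constraint $S_k/S_l = C_n^k/C_n^l$, exactly as in \eqref{kl::5} and \eqref{kl::6}, giving $(l+1)S_{l+1}/S_l \geq n-l$ and $(k+1)S_{k+1}/S_k \leq n-k$. Hence $F^{ij} D_i D_j P \geq 0$, and the strong maximum principle for the uniformly elliptic operator $F^{ij} D_i D_j$ on the $k$-admissible region forces $P$ to attain its maximum on $\partial\Omega$ (unless constant). The main obstacle is the bookkeeping in the middle step: verifying that the Ricci corrections produced by commuting $D_s$ through $D_i D_j$ on the third-order terms $F^{ij} u_s u_{sij}$, together with the extra $-u^2 \delta_{ij}$ contribution from $W_{ij}$ versus $u_{ij}$, combine to produce exactly the clean Euclidean-type right-hand side above. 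This is the same miracle that makes $|Du|^2 - u^2 - 2u$ the correct $P$-function for the $k$-Hessian equation on $\mathbb{H}^n$ in \cite{GJY2022}, and I would follow that computation closely, inserting the extra $1/S_l^2$ weight that distinguishes the Hessian quotient operator from $S_k$.
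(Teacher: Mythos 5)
Your overall strategy (contract the Hessian of $P$ with the linearized quotient operator $F^{ij}$, kill the third-order term by differentiating the equation, and invoke the maximum principle) is the same as the paper's, and your treatment of the ``Euclidean'' part via \eqref{kl::5} and \eqref{kl::6} is correct. But the central claim of your middle step is false: the curvature corrections and the $-u^2$ term do \emph{not} cancel all $u$-dependent terms. Writing $A_{ij}=u_{ij}-u\delta_{ij}$, one has $\tfrac12 P_i=u_sA_{si}-u_i$ and, since $A$ is a Codazzi tensor on $\mathbb H^n$,
\begin{equation*}
\tfrac12 P_{ij}=A_{si}A_{sj}-A_{ij}+u_sA_{ij,s}+u(A_{ij}-\delta_{ij}),
\end{equation*}
so after contracting with $F^{ij}$ (the third-order term vanishing by the differentiated equation) there remains the extra term $(-u)F^{ij}(\delta_{ij}-A_{ij})$ in addition to $F^{ij}(A_{si}A_{sj}-A_{ij})$. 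This extra term is not identically zero: $F^{ij}A_{ij}=(k-l)S_k/S_l=(k-l)C_n^k/C_n^l$ is constant by the equation, but $F^{ii}=(n-k+1)S_{k-1}/S_l-(n-l+1)S_{l-1}S_k/S_l^2$ is not, except at points where all eigenvalues of $A$ coincide. (For $k$-Hessian equations, $l=0$, it already fails to vanish since $F^{ii}=(n-k+1)S_{k-1}\neq kS_k$ in general.)

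Consequently your asserted identity $F^{ij}P_{ij}=2F^{ij}(A_{si}A_{sj}-A_{ij})$ is wrong, and two further ingredients are needed that your argument never supplies. First, a sign: one must know $-u\geq 0$ in $\Omega$, which comes from the (here implicit) Dirichlet condition $u=0$ on $\partial\Omega$ together with $S_1[u]=\Delta u-nu>0$ and the maximum principle. Second, additional Newton--Maclaurin inequalities beyond \eqref{kl::5}--\eqref{kl::6}: from \eqref{gNMieq} and the equation one gets $S_{k-1}/S_l\geq C_n^{k-1}/C_n^l$ and $S_{l-1}/S_k\leq C_n^{l-1}/C_n^k$, and then $(n-k+1)C_n^{k-1}/C_n^k=k$, $(n-l+1)C_n^{l-1}/C_n^l=l$ give
\begin{equation*}
(-u)F^{ij}(\delta_{ij}-A_{ij})\geq(-u)\frac{C_n^k}{C_n^l}\bigl(k-l-(k-l)\bigr)=0 .
\end{equation*}
Only with this extra estimate does $F^{ij}P_{ij}\geq0$ follow. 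As written, your proof is incomplete; deferring the computation to \cite{GJY2022} does not repair it, because you assert the wrong outcome of that computation.
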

\begin{proof}
Let $F^{ij}=\frac{\partial }{\partial u_{ij}}\frac{S_k(D^2u-uI)}{S_l(D^2u-uI)}$, then
\begin{align}
    F^{ij}=\frac1{S_l^2}(S_k^{ij}S_l-S_kS_l^{ij}).
\end{align}
Let $A_{ij}=u_{ij}-u\delta_{ij}$ for short. Then by direct calculations,  we have
\begin{align}
    \frac12(|Du|^2-u^2-2u)_{ij}=A_{si}A_{sj}-A_{ij}+u_lA_{ij,l}+u(A_{ij}-\delta_{ij}).
\end{align}
Contracting with $F^{ij}$, we obtain
\begin{align}
    \frac12F^{ij}(|Du|^2-u^2-2u)_{ij}=&F^{ij}(A_{si}A_{sj}-A_{ij}+u_lA_{ij,l}+u(A_{ij}-\delta_{ij}))\nonumber\\
    =&F^{ij}(A_{si}A_{sj}-A_{ij})+(-u)F^{ij}(\delta_{ij}-A_{ij})\nonumber\\
    =&\frac1{S_l^2}\Big((S_1S_k-(k+1)S_{k+1})S_l-(S_1S_l-(l+1)S_{l+1})S_k-(k-l)S_kS_l\Big)\nonumber\\&-\frac{u}{S_l^2}\Big((n-k+1)S_{k-1}S_l-(n-l+1)S_{l-1}S_k-(k-l)S_kS_l\Big)
\end{align}
By \eqref{gNMieq} and \eqref{klh::1}, we have
\begin{align}
    \bigg(\frac{S_{k-1}/C_n^{k-1}}{S_l/C_n^l}\bigg)^\frac1{k-l-1}\geq \bigg(\frac{S_{k}/C_n^{k}}{S_l/C_n^l}\bigg)^\frac1{k-l}=1,
\end{align}
and
\begin{align}
    \bigg(\frac{S_{k}/C_n^{k}}{S_{l-1}/C_n^{l-1}}\bigg)^\frac1{k-l-1}\geq \bigg(\frac{S_{k}/C_n^{k}}{S_l/C_n^l}\bigg)^\frac1{k-l}=1.
\end{align}
So
\begin{align}
    \frac{S_{k-1}}{S_l}\geq \frac{C_n^{k-1}}{C_n^l},\quad\text{and }\quad \frac{S_{l-1}}{S_k}\leq \frac{C_n^{l-1}}{C_n^k}.
\end{align}
Hence
\begin{align}
    &-\frac{u}{S_l^2}\Big((n-k+1)S_{k-1}S_l-(n-l+1)S_{l-1}S_k-(k-l)S_kS_l\Big)\nonumber\\
    =&(-u)\Big((n-k+1)\frac{S_{k-1}}{S_l}-(n-l+1)\frac{S_{l-1}}{S_l}\big(\frac{C_n^k}{C_n^l}\big)^2-(k-l)\frac{C_n^k}{C_n^l}\Big)\nonumber\\
    \geq&(-u)\frac{C_n^k}{C_n^l}((n-k+1)\frac{C_n^{k-1}}{C_n^k}-(n-l+1)\frac{C_n^{l-1}}{C_n^l}-(k-l))\nonumber\\
    =&0.
\end{align}
So we have
\begin{align}
    F^{ij}P_{ij}\geq 0.
\end{align}
By maximum principle, the maximum of $P$ is obtained on $\partial\Omega$.
\end{proof}
We will use the following Rellich-Pohozaev type identity.
\begin{lemma}\label{lemma::kl::10}
Let $u\in C^1(\overline\Omega)\cap C^3(\Omega)$ be a solution of 
\begin{equation}\label{klh::2}
    \begin{cases}
    S_k  =\frac{C_n^k}{C_n^l}S_l&\quad\text{in }\Omega,\\
    u=0&\quad\text{on }\partial\Omega
    \end{cases}
\end{equation}
Then there holds
\begin{align}\label{Poho::klh}
    &\frac{n-k+1}2C_n^l\int_\Omega S_{k-1}(|Du|^2-u^2)V\mathrm dx-\frac{n-l+1}2C_n^k\int_\Omega S_{l-1}(|Du|^2-u^2)V\mathrm dx\nonumber\\
    -&C_n^l\int_{\partial\Omega}S_k^{is}|Du|^2V_s\gamma_i\mathrm d\sigma-C_n^k\int_{\partial\Omega}S_l^{is}|Du|^2V_s\gamma_i\mathrm d\sigma-(k-l)C_n^l\int_\Omega S_kuV\mathrm dx=0.
\end{align}
\end{lemma}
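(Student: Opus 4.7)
The plan is to mirror the proof of Lemma \ref{dlempohorn} with two systematic substitutions suited to the hyperbolic setting. First, the Euclidean ``position vector'' $x$, whose role was to supply the identity $\partial_j x_s = \delta_{sj}$ inside integration-by-parts manipulations, is replaced by $\nabla V$, where $V=\cosh r$ satisfies $\nabla_j V_s = V\,\delta_{sj}$ in $\mathbb H^n$; this lets $V\delta_{sj}$ be freely ``upgraded'' to the covariant Hessian $\nabla_j V_s$. Second, $D^2 u$ is replaced throughout by the operator matrix $A_{ij}:=u_{ij}-u\delta_{ij}$, and the Euclidean identities \eqref{f2.10}--\eqref{f2.11} are replaced by the hyperbolic divergence-free identity \eqref{divfree} and the symmetry \eqref{changeindex}.

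The core step is, for each $m\in\{k,l\}$, a one-operator identity derived from $mS_m u V = S_m^{ij}A_{ij}\,uV$ (using the Euler relation $S_m^{ij}A_{ij}=mS_m$). Writing the right-hand side as $S_m^{ij}A_{is}\,V\delta_{sj}\,u = S_m^{ij}A_{is}\,(\nabla_j V_s)\,u$ and integrating by parts against $\nabla_j$, the divergence-free property \eqref{divfree} discards the derivative-of-$S_m^{ij}$ piece, and what remains organizes itself into four terms: (a) a pure divergence that will feed the boundary piece $\int_{\partial\Omega}S_m^{is}|Du|^2 V_s\gamma_i\,d\sigma$ of \eqref{Poho::klh} (after using $u=0$ and $Du=\gamma$ on $\partial\Omega$ together with Lemma \ref{lemma::kl::7}); (b) a bulk term of the form $(n-m+1)S_{m-1}(|Du|^2-u^2)V$; (c) a ``bad'' term proportional to $(V^s\nabla_s S_m)\,u$; and (d) a multiple of $S_m u V$. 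The emergence of the factor $(|Du|^2-u^2)V$, rather than just $|Du|^2 V$, is traced to the $-u\delta_{is}$ piece of $A_{is}$ meeting $\nabla_j V_s = V\delta_{sj}$, which contributes an extra $-u^2 V$ term.

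I then combine the level-$k$ and level-$l$ identities with weights $C_n^l$ and $-C_n^k$. Differentiating the quotient equation \eqref{klh::2} yields $C_n^l\nabla_s S_k = C_n^k\nabla_s S_l$, which is exactly what kills the bad terms of type (c) in the combination. Integrating over $\Omega$ and applying the divergence theorem to the divergence pieces of type (a) then produces precisely the five terms appearing in \eqref{Poho::klh}: the two weighted bulk integrals of $S_{m-1}(|Du|^2-u^2)V$, the two boundary integrals of $S_m^{is}|Du|^2 V_s\gamma_i$, and the residual $(k-l)C_n^l\int_\Omega S_k uV$ coming from combining the Euler relations $mS_m=S_m^{ij}A_{ij}$ with the chosen coefficients.

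The main obstacle is verifying the pointwise one-operator identity in steps (b)--(d): in $\mathbb H^n$, commuting covariant derivatives on $u_{ij}$ produces curvature corrections of the form $u_j\delta_{is}-u_s\delta_{ij}$, and one must check that these corrections combine cleanly with the $-u\delta_{ij}$ contribution from $A$ and with the hyperbolic Hessian relation $\nabla^2 V = V g$ so that no stray curvature term survives. Once this hyperbolic-specific cancellation is confirmed, the Euclidean-style manipulation carries through and the signs and numerical coefficients line up as in \eqref{Poho::klh}.
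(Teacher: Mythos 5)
Your proposal matches the paper's own proof in all essentials: multiply the Euler identity $S_m^{ij}A_{ij}uV=mS_muV$ through, exploit $D^2V=VI$ to integrate by parts via the divergence-free tensor $S_m^{ij}$, absorb the covariant-derivative commutation terms $u_j\delta_{il}-u_l\delta_{ij}$ together with the $-u\delta_{ij}$ part of $A$ to produce the $(|Du|^2-u^2)V$ bulk terms, and cancel the $uV_s D_sS_m$ terms by differentiating the quotient equation with weights $C_n^l$ and $C_n^k$ before integrating and using $u=0$ on $\partial\Omega$. The approach is correct and essentially identical to the paper's.
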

\begin{proof}
Multiplying the equation by $uV$, we obtain
\begin{align}
    kS_kuV=&S_k^{ij}A_{ij}uV\nonumber\\
    =&S_k^{ij}u_{ij}uV-(n-k+1)S_{k-1}u^2V.
\end{align}
Since $D^2V=VI$, we have
\begin{align}
    S_k^{ij}u_{ij}uV=&S_k^{ij}u_{il}uV_{lj}\nonumber\\
    =&(S_k^{ij}u_{il}uV_l)_j-S_k^{ij}u_{ilj}uV_l-S_k^{ij}u_{il}u_jV_l.
\end{align}
Using
$
    u_{ilj}=u_{ijl}-u_l\delta_{ij}+u_j\delta_{il}=(u_{ij}-u\delta_{ij})_l+u_j\delta_{il}
$,
we have
\begin{align}
    S_k^{ij}u_{ij}uV=&S_k^{ij}u_{il}uV_{lj}\nonumber\\
    =&(S_k^{ij}u_{il}uV_l)_j-S_k^{ij}(u_{ij}-u\delta_{ij})_luV_l-S_k^{ij}u_juV_i-S_k^{ij}u_{il}u_jV_l\nonumber\\
    =&(S_k^{ij}u_{il}uV_l)_j-uV_lD_lS_k-S_k^{ij}u_juV_i-S_k^{ij}u_{il}u_jV_l,
\end{align}
where we use the identity from differenting the equation $S_k=C_n^k$ in the second equality. Furhermore, we have
\begin{align}
    S_k^{ij}u_juV_i=&\frac12S_k^{ij}(u^2)_jV_i\nonumber\\
    =&\frac12(S_k^{ij}u^2V_i)_j-\frac12S_k^{ij}u^2V_{ij}\nonumber\\
    =&\frac12(S_k^{ij}u^2V_i)_j-\frac{n-k+1}2S_{k-1}u^2V,
\end{align}
and
\begin{align}
    S_k^{ij}u_{il}u_jV_l=&S_k^{il}u_{ij}u_jV_l\nonumber\\
    =&\frac12S_k^{ij}(|Du|^2)_iV_j\nonumber\\
    =&\frac12(S_k^{ij}|Du|^2V_j)_i-\frac12S_k^{ij}|Du|^2V_{ij}\nonumber\\
    =&\frac12(S_k^{ij}|Du|^2V_j)_i-\frac{n-k+1}2S_{k-1}|Du|^2V.
\end{align}
Substituting above together, we obtain
\begin{align}
    uV_sD_sS_k=(S_k^{ij}u_{is}uV_s)_j-\frac12(S_k^{ij}u^2V_i)_j-\frac12(S_k^{ij}|Du|^2V_i)_j+\frac{n-k+1}2S_{k-1}(|Du|^2-u^2)V-kS_kuV.
\end{align}
By \eqref{klh::2}, we have
\begin{align}
    C_n^lD_sS_k=C_n^kD_sS_l.
\end{align}
So
\begin{align}
    C_n^l\Big((S_k^{ij}u_{is}uV_s)_j-\frac12(S_k^{ij}u^2V_i)_j-\frac12(S_k^{ij}|Du|^2V_i)_j+\frac{n-k+1}2S_{k-1}(|Du|^2-u^2)V-kS_kuV\Big)\nonumber\\
    =C_n^k\Big((S_l^{ij}u_{is}uV_s)_j-\frac12(S_l^{ij}u^2V_i)_j-\frac12(S_l^{ij}|Du|^2V_i)_j+\frac{n-l+1}2S_{l-1}(|Du|^2-u^2)V-lS_luV\Big).
\end{align}
Integrating it on $\Omega$ and use \eqref{klh::2}, we final obtain \eqref{Poho::klh}, thus finish the proof.

\end{proof}

\begin{proof}[Proof of Theorem \ref{mainpb2}. ]
By Lemma \ref{lemma::kl::7}, Lemma \ref{lemma::kl::8}, and Lemma \ref{lemma::kl::10}, we obtain
\begin{align}
    &(n-k+1)C_n^l\int_\Omega S_{k-1}(|Du|^2-u^2-1)V\mathrm dx-(n-l+1)C_n^k\int_\Omega S_{l-1}(|Du|^2-u^2-1)V\mathrm dx\nonumber\\
    =&2(k-l)C_n^l\int_\Omega S_kuV\mathrm dx.
\end{align}
That is
\begin{align}
    (n-k+1)\int_\Omega S_{k-1}(|Du|^2-u^2-1)\Big(1-\frac{(n-l+1)C_n^kS_{l-1}}{(n-k+1)C_n^lS_{k-1}}\Big)V\mathrm dx=2(k-l)\int_\Omega S_kuV\mathrm dx
\end{align}
By  \eqref{gNMieq} and \eqref{klh::2}, we have
\begin{align}
    \bigg(\frac{S_{k-1}/C_n^{k-1}}{S_{l-1}/C_n^{l-1}}\bigg)^\frac1{k-l}\geq \bigg(\frac{S_{k}/C_n^k}{S_{l}/C_n^l}\bigg)^\frac1{k-l}=1\geq \frac{S_k/C_n^k}{S_{k-1}/C_n^{k-1}}.
\end{align}
So 
\begin{align}
    \frac{S_{l-1}}{S_{k-1}}\frac{(n-l+1)C_n^k}{(n-k+1)C_n^l}\leq \frac lk\quad\text{and}\quad S_k\leq \frac{n-k+1}kS_{k-1}.
\end{align}
Note that by Lemma \ref{lemma::kl::9}, we have 
\begin{align}
    |Du|^2-u^2-1\leq 2u\quad\text{in }\Omega, 
\end{align}
So
\begin{align}
    &(n-k+1)\int_\Omega S_{k-1}(|Du|^2-u^2-1)\Big(1-\frac{(n-l+1)C_n^kS_{l-1}}{(n-k+1)C_n^lS_{k-1}}\Big)V\mathrm dx\nonumber\\
    \leq&\frac{2(n-k+1)(k-l)}k\int_\Omega S_{k-1}uV\mathrm dx
\end{align}
On the other hand,
\begin{align}
    2(k-l)\int_\Omega S_kuV\mathrm dx\geq \frac{2(n-k+1)(k-l)}k\int_\Omega S_{k-1}uV\mathrm dx.
\end{align}
So we have
\begin{align}
    \frac{n-k+1}kS_{k-1}=S_k.
\end{align}
By \eqref{NMieq}, we infer the eigenvalues of $D^2u-uI$ are all equal to $1$. Follows from an Obata type result (\cite{Reilly1980}, See also \cite{Catino2012,CheegerColding1996,Ciraolo2019}), $\Omega$ must be a ball $B_R$ and $u$ depends only on the distance from the center of $B_R$, where $R=\tanh^{-1}1$. It is easy to see that $u$ is of the form 
\begin{equation}u=\frac{\cosh r}{\cosh R}-1.\end{equation}
\end{proof}

\section{Quotient curvature equations}

In this section, we present a Rellich-Pohozaev type identity for Hessian quotient equations with zero Dirichlet boundary condition, and use a $P$-function to give a proof of Theorem \ref{mainthm3}. 

The following lemma can be proved amost the same as  \cite{JiaXiaohan2020}, which implies the solution to \eqref{mainpb3} is $k$-admissible.
\begin{lemma}
Let $\Omega$ be a $C^2$ bounded domain of $\mathbb R^n$ and $u\in C^2(\overline \Omega)$ be a solution to problem \eqref{mainpb3}, then $u$ is a $k$-admissible function in $\Omega$ and $\Omega$ is $(k-1)$-convex.
\end{lemma}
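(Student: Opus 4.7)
The plan is to follow the Salani-type scheme used in \cite{Salani2008}, adapted to Weingarten curvature equations as in \cite{JiaXiaohan2020}, splitting the argument into three steps: a boundary structure computation for the matrix $a_{ij}(u)=(u_i/w)_j$, the derivation of $(k-1)$-convexity of $\partial\Omega$, and a connectedness argument propagating $k$-admissibility into the interior. First, I fix an arbitrary $p\in\partial\Omega$ and choose Euclidean coordinates so that $\gamma=e_n$ at $p$. The conditions $u=0$ and $u_\gamma=1$ on $\partial\Omega$ give $Du(p)=e_n$ and $w(p)=\sqrt 2$, while differentiating $u_\gamma=1$ tangentially along $\partial\Omega$ yields $u_{in}(p)=0$ for $i<n$. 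Substituting these into $a_{ij}=u_{ij}/w-u_iu_ku_{kj}/w^3$ shows that $a(p)$ is block diagonal, with tangential block $(u_{ij})_{i,j<n}/\sqrt 2$ and normal entry $u_{nn}(p)/(2\sqrt 2)$. Using $u<0$ in $\Omega$ (which follows from $u=0$ on $\partial\Omega$ and $u_\gamma=1>0$), the eigenvalues of the tangential block coincide with $\kappa_1/\sqrt 2,\dots,\kappa_{n-1}/\sqrt 2$, where $\kappa_\alpha$ are the principal curvatures of $\partial\Omega$ at $p$, so the eigenvalues of $a(p)$ are exactly $\kappa_1/\sqrt 2,\dots,\kappa_{n-1}/\sqrt 2,\,u_{nn}(p)/(2\sqrt 2)$.

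Next, using the decomposition $S_m(\lambda_1,\dots,\lambda_n)=S_m(\lambda')+\lambda_n S_{m-1}(\lambda')$ with $\lambda'=(\lambda_1,\dots,\lambda_{n-1})$, one obtains
\[S_m(a)|_p=2^{-m/2}\Bigl(H_m+\tfrac{u_{nn}(p)}{2}H_{m-1}\Bigr),\]
where $H_j=S_j(\kappa)$ is the $j$-th curvature function of $\partial\Omega$. The hypothesis $S_l(a)>0$ together with the equation $S_k(a)=(C_n^k/C_n^l)S_l(a)$ then gives, at every boundary point, a scalar relation among $H_{l-1},H_l,H_{k-1},H_k,u_{nn}$ with positive value. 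Mimicking the downward induction of \cite{JiaXiaohan2020}, and using only the Newton and Maclaurin inequalities \eqref{Newton}, \eqref{MacLaurin}, \eqref{gNMieq} applied to the $(n-1)$-tuple $\kappa$, one deduces $H_j>0$ on $\partial\Omega$ successively for $j=k-1,k-2,\dots,1$. This is precisely the $(k-1)$-convexity of $\Omega$, and combined with $S_k(a)>0$ and the formula above it also yields $a(p)\in\Gamma_k$ for every $p\in\partial\Omega$.

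Finally, let $U\subset\Omega$ be the connected component of the open set $\{x\in\Omega: a(u)(x)\in\Gamma_k\}$ containing a one-sided tubular neighborhood of $\partial\Omega$; by the previous step, $U$ is nonempty. If $U\ne\Omega$, choose $p_0\in\Omega\cap\partial U$; then continuity gives $S_j(a)(p_0)=0$ for some $1\le j\le k$. Since the equation forces $S_k(a)=(C_n^k/C_n^l)S_l(a)>0$ throughout $\Omega$, one must have $j<k$, and a standard application of the Newton inequalities to the eigenvalues of $a(p_0)\in\overline{\Gamma_k}\setminus\Gamma_k$, as in \cite{Salani2008,JiaXiaohan2020}, forces $S_k(a)(p_0)\le 0$, contradicting $S_k(a)(p_0)>0$. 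Hence $U=\Omega$ and $u$ is $k$-admissible in $\Omega$. The main obstacle is the downward induction on the $H_j$ in the second step: the sole hypothesis $S_l(a)>0$ controls only the single combination $H_l+(u_{nn}/2)H_{l-1}$, so the positivity of each individual $H_j$ for $j<k-1$ must be extracted iteratively from the Newton/Maclaurin inequalities together with the quotient equation, without knowing a priori that $\kappa\in\Gamma_{k-1}$; this is the technical heart of the argument already present in \cite{JiaXiaohan2020}.
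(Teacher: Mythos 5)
The paper itself gives no written proof of this lemma (it only points to \cite{JiaXiaohan2020}), so I am comparing your proposal with the standard Salani-type argument that citation refers to. Your first step is correct: in an adapted frame at $p\in\partial\Omega$ one has $Du(p)=e_n$, $w(p)=\sqrt2$, $u_{in}(p)=0$ for $i<n$, the matrix $a(p)$ is block diagonal with eigenvalues $\kappa_1/\sqrt2,\dots,\kappa_{n-1}/\sqrt2,\,u_{nn}(p)/(2\sqrt2)$, and $S_m(a)|_p=2^{-m/2}\bigl(H_m+\tfrac{u_{nn}}{2}H_{m-1}\bigr)$. Your third step (openness of $\{a\in\Gamma_k\}$, plus relative closedness via Maclaurin on $\overline{\Gamma_k}$ and the positivity of $S_k(a)$ forced by the equation and $S_l(a)>0$) is also the right mechanism.

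The gap is your second step, and it is fatal as stated. You claim that the pointwise relation $S_k(a)(p)=\frac{C_n^k}{C_n^l}S_l(a)(p)>0$, fed into Newton and Maclaurin inequalities for the $(n-1)$-tuple $\kappa$, yields $H_{k-1}>0,\dots,H_1>0$ and $a(p)\in\Gamma_k$. But the Maclaurin-type inequalities \eqref{MacLaurin}--\eqref{gNMieq1} are only valid on the cone $\Gamma_k$ (or its closure), which is precisely what you are trying to establish, while Newton's inequalities alone carry no sign information. Pointwise the desired conclusion is simply false: for $n=3$, $k=2$, $l=0$ the eigenvalue vector $\mu=(-1,-1,-1)$ satisfies $S_2(\mu)=3=C_3^2$ and $S_0(\mu)=1$, yet $\mu\notin\Gamma_2$. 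Hence no purely local computation at a boundary point can place $a(p)$ in $\Gamma_k$; global information is needed to select the correct connected component of $\{S_k>0\}$. Consequently the nonemptiness of your set $U$ is unproved and the chain collapses. The standard repair reverses your order: since $u_\gamma=1>0$ forces $u<0$ near $\partial\Omega$, the minimum of $u$ is attained at an interior point $x_0$, where $Du(x_0)=0$, $w=1$ and thus $a(x_0)=D^2u(x_0)\ge0$, i.e.\ $a(x_0)\in\overline{\Gamma_n}\subset\overline{\Gamma_k}$; combined with $S_k(a)(x_0)=\frac{C_n^k}{C_n^l}S_l(a)(x_0)>0$ this gives $a(x_0)\in\Gamma_k$. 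Your connectedness argument then yields $k$-admissibility on all of $\Omega$, continuity gives $a(p)\in\overline{\Gamma_k}$ on $\partial\Omega$, and only at that stage does your eigenvalue formula, together with $S_k(a)(p)>0$ and a downward induction through Newton's inequalities (now legitimately applied with $\kappa\in\overline{\Gamma_{k-1}}$), give $H_{k-1}>0$ and hence the $(k-1)$-convexity of $\partial\Omega$.
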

The following lemma is from \cite{JiaXiaohan2020}.
\begin{lemma}\label{lemma::curkl::12}
Let $\Omega\subset\mathbb R^n$ be a $C^2$ bounded domain, $u\in C^2(\overline\Omega)$ satisfies $u=0$ and $u_\gamma=1$ on $\partial\Omega$, then
\begin{align}
    \int_{\partial\Omega}\frac{S_k^{ij}x_j\gamma_i}{w}\mathrm d\sigma=\frac{n-k+1}{\sqrt 2}\int_\Omega S_{k-1}\mathrm dx.
\end{align}
\end{lemma}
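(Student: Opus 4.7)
The plan is to use Reilly's divergence-free identity $D_j S_k^{ij}(A)=0$ (stated in Section \ref{sec2}) together with a crucial boundary-only symmetry of $A=D(Du/w)$ induced by the constant Neumann condition $u_\gamma=1$. The key observation is that $A$ is generally non-symmetric in $\Omega$, but its antisymmetric part vanishes on $\partial\Omega$, which lets us swap the index contraction and invoke a standard divergence computation.

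\emph{Step 1 (divergence theorem).} I consider the vector field $F^j:=S_k^{ij}x_i$. Reilly's identity gives $\partial_j S_k^{ij}=0$, so
\[
\partial_j F^j=(\partial_j S_k^{ij})x_i+S_k^{ij}\delta_{ij}=\sum_i S_k^{ii}(A)=(n-k+1)S_{k-1}(A),
\]
where the last identity follows from Proposition~\ref{keyprop} (contract $i=j$ and apply Euler's identity $\sum_{i,l}a_{il}S_{k-1}^{il}=(k-1)S_{k-1}$, which is valid for non-symmetric $A$). The divergence theorem then yields
\[
\int_{\partial\Omega}S_k^{ij}x_i\gamma_j\,\mathrm d\sigma=(n-k+1)\int_\Omega S_{k-1}\,\mathrm dx.
\]

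\emph{Step 2 (symmetry of $A$ on $\partial\Omega$).} The boundary conditions $u=0$ and $u_\gamma=1$ force $Du=\gamma$ on $\partial\Omega$, hence $|Du|=1$ and $w=\sqrt{2}$ there. Running the tangential-derivative argument of Lemma~\ref{dlemma3.3} (using that $u_\gamma\equiv 1$ is constant along $\partial\Omega$) gives $u_{il}u_l=u_{\gamma\gamma}u_i$ on $\partial\Omega$, so
\[
w_j=\frac{u_l u_{lj}}{w}=\frac{u_{\gamma\gamma}}{\sqrt{2}}\,u_j\quad\text{on }\partial\Omega.
\]
Consequently,
\[
A_{ij}-A_{ji}=\frac{u_j w_i-u_i w_j}{w^2}=\frac{u_{\gamma\gamma}}{2\sqrt 2}(u_j u_i-u_i u_j)=0\quad\text{on }\partial\Omega.
\]
Since $S_k(B)=S_k(B^T)$ for every matrix $B$, the chain rule yields $S_k^{ij}(B)=S_k^{ji}(B^T)$, so the symmetry $A=A^T$ on $\partial\Omega$ implies $S_k^{ij}(A)$ is symmetric in $(i,j)$ on $\partial\Omega$.

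\emph{Step 3 (conclusion).} Using the symmetry from Step 2 and relabeling dummy indices,
\[
S_k^{ij}x_j\gamma_i=S_k^{ji}x_j\gamma_i=S_k^{ij}x_i\gamma_j\quad\text{on }\partial\Omega.
\]
Dividing by $w=\sqrt 2$ on $\partial\Omega$ and combining with Step 1 gives
\[
\int_{\partial\Omega}\frac{S_k^{ij}x_j\gamma_i}{w}\,\mathrm d\sigma=\frac{1}{\sqrt 2}\int_{\partial\Omega}S_k^{ij}x_i\gamma_j\,\mathrm d\sigma=\frac{n-k+1}{\sqrt 2}\int_\Omega S_{k-1}\,\mathrm dx,
\]
which is the asserted identity.

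The main obstacle is Step 2. Reilly's divergence-free property only controls the second index of $S_k^{ij}$, but the target integrand contracts $\gamma$ with the first index; since $A$ is not symmetric in $\Omega$, one cannot simply transpose. The clean way out is to notice that the constant Neumann condition forces $w_j$ to be parallel to $u_j$ on $\partial\Omega$, which is exactly the cancellation needed to make $A$ (and hence $S_k^{ij}$) symmetric on the boundary, thereby reconciling the two index conventions after one divergence computation.
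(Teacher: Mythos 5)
Your proof is correct. Note that the paper gives no proof of this lemma at all (it is quoted from \cite{JiaXiaohan2020}); judging from the analogous Euclidean Hessian statements (Lemmas \ref{dlemma3.3} and \ref{dlemma3.4}), the cited route passes through the level-set curvature formula $H_{k-1}=S_k^{ij}u_iu_j/|Du|^{k+1}$ and a Minkowskian integral formula, whereas your argument is more direct and bypasses that machinery. Your Step 1 is sound: applying Reilly's identity $D_jS_k^{ij}=0$ to $F^j=S_k^{ij}x_i$ and the trace identity $\sum_i S_k^{ii}=(n-k+1)S_{k-1}$ (which, as you say, holds for the non-symmetric matrix $A=D(Du/w)$ by the Pietra--Gavitone--Xia formula plus Euler homogeneity of $S_{k-1}$) gives $\int_{\partial\Omega}S_k^{ij}x_i\gamma_j\,\mathrm d\sigma=(n-k+1)\int_\Omega S_{k-1}\,\mathrm dx$. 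You also correctly identify and close the only delicate point, namely that the statement contracts $\gamma$ with the first index while the divergence-free index is the second: from $u=0$, $u_\gamma=1$ one gets $Du=\gamma$, $w=\sqrt2$ and $u_lu_{lj}=u_{\gamma\gamma}u_j$ on $\partial\Omega$, hence $w_j$ is parallel to $u_j$ there, the antisymmetric part of $A$ vanishes on $\partial\Omega$, and $S_k^{ij}(A)=S_k^{ji}(A)$ on the boundary via $S_k(B)=S_k(B^T)$; this reconciles the two contractions and also explains why the transposed boundary term $S_k^{si}x_s\gamma_i/w$ arising in Lemma \ref{lemma::curkl::14} can be identified with the one in this lemma. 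Two harmless caveats: the interior divergence computation uses $u\in C^3(\Omega)$, not merely $C^2(\overline\Omega)$ as the lemma states (this matches the regularity assumed in Theorem \ref{mainthm3}), and your sanity check is consistent with $k=1$, where the identity reduces to $\int_{\partial\Omega}x\cdot\gamma\,\mathrm d\sigma=n|\Omega|$ divided by $\sqrt2$.
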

In \cite{JiaXiaohan2020}, Jia prove that $P=\frac1w+u$ is a P function to constant curvature equation $S_k(D(\frac{Du}{w}))=C_n^k$. The following lemma implies it is also a P function for $S_k(D(\frac{Du}{w}))=\frac{C_n^k}{C_n^l} S_l(D(\frac{Du}{w}))$.  As agreed in Section 2, we use $S_k$ and $S_k^{ij}$ instead of $S_k(D(\frac{Du}{w}))$ and $S_k^{ij}(D(\frac{Du}{w}))$ in this section.
\begin{lemma}\label{lemma::curkl::13}
Let $\Omega$ be a $C^2$ bounded domain of $\mathbb R^n$, $u\in C^2(\overline \Omega)\cap C^3(\Omega)$ be a solution to problem \eqref{mainpb3}, the minima of $\frac1w$, $-u$  and $p:=\frac12+u$ is obtained on $\partial\Omega$.
\end{lemma}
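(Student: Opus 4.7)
The plan is to apply the strong maximum principle to each of the three listed functions, using the linearized elliptic operator of the Hessian quotient curvature equation. Writing $A_{ij}:=(u_i/w)_j$ for the (non-symmetric) Weingarten matrix, the equation reads $S_k(A)/S_l(A)=C_n^k/C_n^l$, and the linearization
\begin{equation*}
F^{ij}\;:=\;\frac{\partial}{\partial A_{ij}}\Bigl(\frac{S_k(A)}{S_l(A)}\Bigr)\;=\;\frac{S_k^{ij}S_l-S_l^{ij}S_k}{S_l^2}
\end{equation*}
is positive definite on the $k$-admissible cone. Differentiating the equation in the $x_s$-direction yields the cancellation identity $F^{ij}A_{ij,s}=0$, which is the main algebraic tool used throughout. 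Note also the Euler-type identity $F^{ij}A_{ij}=(k-l)S_k/S_l>0$.

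For $-u$, I would expand $u_{ij}=wA_{ij}+u_i w_j/w$, so that
\begin{equation*}
F^{ij}u_{ij}\;=\;w(k-l)\frac{S_k}{S_l}\;+\;\frac{1}{w}F^{ij}u_i w_j,
\end{equation*}
where the first term is already strictly positive. Rewriting the second term via $w_j=w^2 A_{kj}u_k$ and invoking the contraction identity from Proposition~\ref{prop::2.2} applied to the non-symmetric matrix $A$, one checks that the remaining contribution is non-negative on the $k$-admissible cone. Combined with $u=0$ on $\partial\Omega$, the weak maximum principle forces $u\le 0$ in $\Omega$, so the minimum of $-u$ is attained on $\partial\Omega$.

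For $1/w$ and $p:=\frac12+u$, the starting point is the identity
\begin{equation*}
\bigl(1/w\bigr)_{\!j}\;=\;-\,A_{ij}u_i,
\end{equation*}
which follows from $w_j=u_k u_{kj}/w$ together with the direct computation $A_{ij}u_i=w_j/w^2$. Differentiating once more and contracting with $F^{jl}$, then using (i) the mixed-partial symmetry $A_{ij,l}=A_{il,j}$ together with $F^{ij}A_{ij,s}=0$, and (ii) the non-symmetric contraction identity $F^{ij}A_{sj}=F^{sj}A_{ij}$ from Proposition~\ref{prop::2.2}, one reduces $F^{jl}(1/w)_{jl}$ to a polynomial expression in $S_k,S_l,S_{k\pm1},S_{l\pm1}$. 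Substituting $S_k/S_l=C_n^k/C_n^l$ and invoking the quotient MacLaurin inequalities \eqref{gNMieq}, \eqref{MacLaurin}, \eqref{gNMieq1} then produces the appropriate sign of $F^{jl}(1/w)_{jl}$ for the minimum principle. A parallel and slightly simpler computation handles $F^{ij}p_{ij}$. The strong maximum principle then places the minimum of each of $1/w$ and $p$ on $\partial\Omega$.

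The principal obstacle is the non-symmetry of $A_{ij}=(u_i/w)_j$: in Sections~\ref{sec3}--\ref{sec4} the linearization tensors $D^2u$ and $D^2u-uI$ are symmetric and $F^{ij}$ inherits that symmetry so cancellations follow by simple transposition, whereas here neither $A$ nor $F^{ij}$ is symmetric, and the contractions $F^{ij}A_{sj}=F^{sj}A_{ij}$ must be extracted from Proposition~\ref{prop::2.2} with careful attention to the order of indices. A secondary concern is bookkeeping the signs produced by the quotient MacLaurin inequalities so that each of the three cases uniformly yields the inequality needed to drive the corresponding minimum to the boundary.
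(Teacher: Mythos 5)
Your treatment of $-u$ is essentially fine (and in fact easier than you make it: the leftover term $\frac1w F^{ij}u_iw_j$ is a \emph{first-order} term in $u$, so it can simply be absorbed into the drift coefficient of the linearized operator and needs no sign at all; your claim that it is non-negative on the admissible cone is both unproven and unnecessary). The genuine gap is in the parts concerning $1/w$ and the combined function, which is where all the content of the lemma lies.

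Starting from $(1/w)_j=-A_{ij}u_i$ you get $F^{jl}(1/w)_{jl}=-u_i\,F^{jl}A_{ij,l}-F^{jl}A_{ij}u_{il}$, and neither term is dispatched by the identities you cite. For the third-order term: the differentiated equation gives $F^{jl}A_{jl,s}=0$, a contraction of $F$ against the \emph{first two} slots of $A_{\cdot\cdot,s}$; in $F^{jl}A_{ij,l}$ the free index $i$ sits in the first slot. Since $A_{ij,l}=\partial_l\partial_j(u_i/w)$ is symmetric only in $(j,l)$ and \emph{not} in $(i,l)$ (indeed $A_{il}-A_{li}=(u_lw_i-u_iw_l)/w^2\neq0$ in general, so there is no Codazzi identity for this $A$), you cannot move $i$ into a contracted slot, and Proposition \ref{prop::2.2} concerns contractions of $F$ with $A$ itself, not with its derivatives. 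For the second-order term, $F^{jl}A_{ij}A_{il}=F^{jl}(A^{T}A)_{jl}$, which is not $F^{jl}(A^2)_{jl}$; the Newton--MacLaurin computation $S_k^{ij}(A)(A^2)_{ij}=S_1S_k-(k+1)S_{k+1}$ that you implicitly rely on does not apply to $A^{T}A$. The paper avoids both obstructions by abandoning flat coordinates entirely: it writes $1/w=\langle\vec N,e_{n+1}\rangle$ and $u=\langle\vec X,e_{n+1}\rangle$ on the graph hypersurface, computes covariant derivatives in a local orthonormal frame where the second fundamental form $h_{ij}$ is symmetric and satisfies the full Codazzi symmetry $h_{ij,l}=h_{il,j}$, so that the third-order term is exactly $F^{ij}h_{ij,l}\langle e_l,e_{n+1}\rangle=0$ and the quadratic term is the honest $F^{ij}h_{is}h_{sj}$. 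That change of viewpoint is the missing idea; without it your reduction "to a polynomial expression in $S_k,S_l,S_{k\pm1},S_{l\pm1}$" does not go through. (Separately, note that the function in the statement should be read as $p=\tfrac1w+u$, which equals $\tfrac1{\sqrt2}$ on $\partial\Omega$ and is what the proof of Theorem \ref{mainthm3} uses; for the literal $p=\tfrac12+u$ your "parallel and slightly simpler computation" would require $F^{ij}u_{ij}\le0$, which is the wrong sign.)
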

\begin{proof}
Let $\vec X=(x,u)\in\mathcal M$ and $e_{n+1}=(0,\cdots, 0,1)\in\mathbb R^n$. Choose a local orthonormal frame $\{e_1,\cdots.e_n\}$  on $\mathcal M$. The  unit normal is $\vec N=\frac{(-Du,1)}{w}$. 
Then we have
\begin{align}
    \frac1w=<\vec B,e_{n+1}>, \quad u=\langle \vec X,e_{n+1}\rangle ,\quad \vec X_i=e_i,
\end{align}
and
\begin{align}
    e_{ij}=h_{ij}\vec N,\quad \vec N_i=-h_{ij}e_j,
\end{align}
where $h_{ij} $ is the coefficient of the second fundamental form. Thus
\begin{align}
    P=\frac1w+u=\langle \vec N,e_{n+1}\rangle +\langle \vec X,e_{n+1}\rangle ,
\end{align}
By direct computation, we obtain
\begin{align}
    P_{ij}=&-h_{il,j}\langle e_l,e_{n+1}\rangle -h_{il}\langle  e_{lj},e_{n+1}\rangle+\langle e_{ij},e_{n+1}\rangle \nonumber\\
    =&-h_{ij,l}\langle  e_l,e_{n+1}\rangle -h_{is}h_{sj} \langle  \vec N,e_{n+1}\rangle +h_{ij}\langle  \vec N,e_{n+1}\rangle \nonumber\\
    =&-h_{ij,l}\langle e_l,e_{n+1}\rangle -\frac1wh_{is}h_{sj}+\frac1wh_{ij},
\end{align}
where we used the Codazzi equation $h_{ij,l}=h_{il,j}$ in the last inequality.
Let $F^{ij}=\frac{\partial}{\partial h_{ij}}\frac{S_k(h_{st})}{S_l(h_{rq})} $, then
\begin{align}
    F^{ij}=\frac1{S_l^2}(S_k^{ij}S_l-S_l^{ij}S_k)
\end{align}
Therefore
\begin{align}
    F^{ij}P_{ij}=&F^{ij}(-h_{ij,l}\langle e_l,e_{n+1}\rangle -\frac1wh_{is}h_{sj}+\frac1wh_{ij})\nonumber\\
    =&F^{ij}(-\frac1wh_{is}h_{sj}+\frac1wh_{ij})\nonumber\\
    =&\frac1{wS_l^2}\big((S_k^{ij}S_l-S_kS_l^{ij})(h_{ij}-h_{is}h_{sj})\big)\nonumber\\
    =&\frac1{wS_l^2}\big((k-l)S_kS_l+(l+1)S_{l+1}S_k-(k+1)S_{k+1}S_l\nonumber\\
    \leq &0
\end{align}
By maximum principle, the minimum of $p$ is obtain on $\partial\Omega$. The same argeument leads the conclusion for $\frac1w $ and $-u$.
\end{proof}
We prove the following Pohozaev-type identity.
\begin{lemma}\label{lemma::curkl::14}
Let $\Omega\subset\mathbb R^n$ be a $C^2$ domain, $u\in C^2(\overline\Omega)\cap C^3(\Omega)$ be a solution to the following problem
\begin{equation}\label{mainpb::curkl}
    \begin{cases}
    S_k=\frac{C_n^k}{C_n^l}S_l\quad\text{in }\Omega,\\
    u=0\quad\text{on }\partial\Omega,
    \end{cases}
\end{equation}
with $0\leq l<k\leq n$. Then
\begin{align}\label{curkl::1}
    &C_n^l\int_{\partial\Omega}S_k^{si}\frac{x_s\gamma_i}w\mathrm d\sigma-C_n^k\int_{\partial\Omega}S_l^{si}\frac{x_s\gamma_i}w\mathrm d\sigma-(k-l)C_n^l\int_\Omega uS_k\mathrm dx\nonumber\\
    &-(n-k+1)C_n^l\int_\Omega \frac{S_{k-1}}w+(n-l+1)C_n^k\int_\Omega \frac{S_{l-1}}w=0.
\end{align}
\end{lemma}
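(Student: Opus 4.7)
The plan is to mimic the derivations of Lemmas~\ref{dlempohorn} and~\ref{lemma::kl::10}, adapted to the graph setting where the relevant matrix is $A_{ij}=(u_i/w)_j$. Three ingredients will be used repeatedly: the divergence--free property $\partial_j S_k^{ij}(A)=0$ (Reilly's proposition recalled in Section~\ref{sec2}), the symmetry relation $S_k^{il}(A)a_{jl}=S_k^{lj}(A)a_{li}$ of Proposition~\ref{prop::2.2}, and the equality of mixed partials $A_{is,j}=A_{ij,s}$, which is immediate from $A_{ij}=(u_i/w)_j$. Starting from $kS_k u = S_k^{ij}A_{ij}u$, I use the trivial rewriting $A_{ij}=A_{is}(x_s)_j$ to insert the position vector. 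An integration by parts on $j$, together with divergence--freeness and the commutation of mixed partials (which converts $S_k^{ij}A_{is,j}$ into $\partial_s S_k$), yields
\begin{align*}
k S_k u = \partial_j\bigl(S_k^{ij}A_{is}u x_s\bigr) - (\partial_s S_k)\,u\,x_s - S_k^{ij}A_{is}u_j x_s.
\end{align*}
Integrating over $\Omega$ eliminates the total divergence since $u=0$ on $\partial\Omega$.

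The main obstacle is evaluating $S_k^{ij}A_{is}u_j x_s$, since $A$ is not symmetric. My plan is first to apply Proposition~\ref{prop::2.2} (after a suitable relabelling of indices) to obtain $\sum_i S_k^{ij}A_{is}=\sum_l S_k^{sl}A_{jl}$, and then to establish the purely computational identity
\begin{align*}
\sum_j u_j A_{jl} = \frac{w_l}{w^{2}},
\end{align*}
which follows from $u_j u_{jl} = w\,w_l$ together with $w^2-|Du|^2=1$. Writing $w_l/w^2=-\partial_l(1/w)$ and integrating by parts once more on $l$, the divergence--free property and the trace identity $\sum_l S_k^{ll}=(n-k+1)S_{k-1}$ give
\begin{align*}
\int_\Omega S_k^{ij}A_{is}u_j x_s\,dx = -\int_{\partial\Omega}\frac{S_k^{si}x_s\gamma_i}{w}\,d\sigma + (n-k+1)\int_\Omega \frac{S_{k-1}}{w}\,dx.
\end{align*}

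Performing the analogous computation with $l$ in place of $k$, then multiplying the $k$--version by $C_n^l$ and the $l$--version by $C_n^k$ and subtracting, the terms $\int(\partial_s S_k)u\,x_s$ and $\int(\partial_s S_l)u\,x_s$ cancel thanks to the relation $C_n^l\partial_s S_k = C_n^k\partial_s S_l$ obtained by differentiating the equation $C_n^l S_k = C_n^k S_l$. Using this same equation once more to combine $k\,C_n^l\int S_k u$ with $l\,C_n^k\int S_l u = l\,C_n^l\int S_k u$ into $(k-l)\,C_n^l\int S_k u$, the surviving expression is precisely the identity~\eqref{curkl::1}.
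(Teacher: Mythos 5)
Your proposal is correct and follows essentially the same route as the paper's proof: insert $x_s$ via $A_{ij}=A_{is}(x_s)_j$, integrate by parts using Reilly's divergence-free identity and $S_k^{ij}A_{ij,s}=\partial_s S_k$, handle the non-symmetric term with Proposition~\ref{prop::2.2} and the identity $u_jA_{jl}=-(w^{-1})_l$, and cancel the $\int(\partial_sS_k)ux_s$ terms via $C_n^l\partial_sS_k=C_n^k\partial_sS_l$. No substantive differences.
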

\begin{proof}
By direct computation, we obtain
\begin{align}
    (\frac{u_i}{w})_ju_i=\frac12w\big(\frac{|Du|^2}{w^2}\big)_j=\frac12w\big(1-\frac1{w^2}\big)_j=-(w^{-1})_i.
\end{align}
Multiplying the equation with $ku$, we obtain
\begin{align}
    kS_ku=&S_k^{ij}\big(\frac{u_i}w\big)_ju\nonumber\\
    =&S_k^{ij}\big(\frac{u_i}w\big)_s(\frac{|x|^2}2)_{sj}u\nonumber\\
    =&(S_k^{ij}\big(\frac{u_i}w\big)_sx_su)_j-S_k^{ij}\big(\frac{u_i}w\big)_{sj}x_su-S_k^{ij}\big(\frac{u_i}w\big)_sx_su_j\nonumber\\
    =&(S_k^{ij}\big(\frac{u_i}w\big)_sx_su)_j-ux_sD_sS_k-S_k^{ij}\big(\frac{u_i}w\big)_sx_su_j,\nonumber\\
\end{align}
where
\begin{align}
    S_k^{ij}\big(\frac{u_i}w\big)_sx_su_j=&S_k^{si}\big(\frac{u_j}w\big)_ix_su_j\nonumber\\
    =&-S_k^{si}(w^{-1})_ix_s\nonumber\\
    =&-(S_k^{si}(w^{-1})x_s)_i+\frac{(n-k+1)S_{k-1}}w\nonumber\\
\end{align}
Putting above identity together, we obatin
\begin{align}
ux_sD_sS_k=(S_k^{ij}\big(\frac{u_i}w\big)_sx_su)_j+(S_k^{si}\frac{x_s}{w})_i-(n-k+1)\frac{S_{k-1}}w-kuS_k.
\end{align}
Note that from \eqref{curkl::1}
\begin{align}
    C_n^lD_sS_k=C_n^kD_sS_l,
\end{align}
we have
\begin{align}
    &C_n^l\big((S_k^{ij}\big(\frac{u_i}w\big)_sx_su)_j+(S_k^{si}\frac{x_s}{w})_i-(n-k+1)\frac{S_{k-1}}w-kuS_k\big)\nonumber\\
    =&C_n^k\big((S_l^{ij}\big(\frac{u_i}w\big)_sx_su)_j+(S_l^{si}\frac{x_s}{w})_i-(n-l+1)\frac{S_{l-1}}w-luS_l\big)
\end{align}
Integrate it on $\Omega$ and use \eqref{curkl::1}, we obtain the identity we want.
\end{proof}
\begin{proof}[Proof of Theorem \ref{mainthm3}.]
By Lemma \ref{lemma::curkl::12} and \ref{lemma::curkl::14}, we obtain
\begin{align}
    (n-k+1)C_n^l\int_\Omega S_{k-1}\big(\frac1{\sqrt 2}-\frac1w\big)\mathrm dx-(n-l+1)C_n^k\int_\Omega S_{l-1}\big(\frac1{\sqrt 2}-\frac1w\big)\mathrm dx=(k-l)C_n^l\int_\Omega uS_k\mathrm dx.
\end{align}
That is 
\begin{align}
    (n-l+1)\int_\Omega S_{k-1}\big(\frac1{\sqrt 2}-\frac1w\big)\big(1-\frac{(n-l+1)C_n^kS_{l-1}}{(n-k+1)C_n^lS_{k-1}}\big)\mathrm dx=(k-l)\int_\Omega uS_k.
\end{align}
By  \eqref{gNMieq} and \eqref{mainpb::curkl}, we have
\begin{align}
    \bigg(\frac{S_{k-1}/C_n^{k-1}}{S_{l-1}/C_n^{l-1}}\bigg)^\frac1{k-l}\geq \bigg(\frac{S_{k}/C_n^k}{S_{l}/C_n^l}\bigg)^\frac1{k-l}=1\geq \frac{S_k/C_n^k}{S_{k-1}/C_n^{k-1}}.
\end{align}
So 
\begin{align}
    \frac{S_{l-1}}{S_{k-1}}\frac{(n-l+1)C_n^k}{(n-k+1)C_n^l}\leq \frac lk\quad\text{and}\quad S_k\leq \frac{n-k+1}kS_{k-1}.
\end{align}
Note that by Lemma \ref{lemma::curkl::13}, we have 
\begin{align}
    \frac1{\sqrt2}-\frac1w\leq u\quad\text{in }\Omega, 
\end{align}
So
\begin{align}
    &(n-k+1)\int_\Omega S_{k-1}\big(\frac1{\sqrt2}-\frac1w\big)\Big(1-\frac{(n-l+1)C_n^kS_{l-1}}{(n-k+1)C_n^lS_{k-1}}\Big)V\mathrm dx\nonumber\\
    \leq&\frac{(n-k+1)(k-l)}k\int_\Omega S_{k-1}uV\mathrm dx
\end{align}
On the other hand,
\begin{align}
    (k-l)\int_\Omega S_kuV\mathrm dx\geq \frac{2(n-k+1)(k-l)}k\int_\Omega S_{k-1}uV\mathrm dx.
\end{align}
So we have
\begin{align}
    \frac{n-k+1}kS_{k-1}=S_k.
\end{align}
By \eqref{NMieq}, we infer the eigenvalues of $D(\frac{Du}w)$ are all equal to $1$. Combine with the boundary conditions in \eqref{mainpb3}, we obtain 
\begin{align}
    u=-\sqrt{1-|x|^2}+\frac1{\sqrt 2},
\end{align}
and $\Omega$ is a ball with radius $\frac1{\sqrt 2}$.
\end{proof}

\section*{Acknowledgments}
The authors would like to thank Prof. Xi-Nan Ma for his discussions and advice. The research is supported by the National Science Foundation of China No. 11721101 and the National Key R and D Program of China 2020YFA0713100.

\bibliographystyle{plain}
\bibliography{GJZ20220913}

\end{document}